\documentclass[11pt,a4paper]{article}
\pdfoutput=1 

\newcommand{\op}[1]{\textcolor{red}{OP: #1}}

\usepackage{epsf,epsfig,amsfonts,amsgen,amsmath,amstext,amsbsy,amsopn,amsthm,cases,listings,color
}
\usepackage{ebezier,eepic}
\usepackage{color}
\usepackage{multirow}
\usepackage{epstopdf}
\usepackage{graphicx}
\usepackage{pgf,tikz}
\usepackage{mathrsfs}
\usepackage[marginal]{footmisc}
\usepackage{enumitem}
\usepackage[titletoc]{appendix}
\usepackage{booktabs}
\usepackage{url}
\usepackage{relsize}
\usepackage{mathtools}
\usepackage{comment}
\usepackage{pgfplots}
\usepackage{amssymb}
\usepackage{float}
\usepackage{wasysym}

\usepackage{empheq}

\usepackage{dsfont}

\usepackage{tikz}
\usepackage{longtable}
\usepackage{subfigure}
\pgfplotsset{compat=1.18}
\usepackage{mathrsfs}
\usepackage{wasysym}
\usetikzlibrary{arrows}
\usepackage{aligned-overset}
\usepackage{bm}
\usepackage{bbm}
\usepackage[T1]{fontenc}
\usepackage[backref=page]{hyperref}

\allowdisplaybreaks[1]

\definecolor{uuuuuu}{rgb}{0.27,0.27,0.27}
\definecolor{sqsqsq}{rgb}{0.1255,0.1255,0.1255}

\setlength{\textwidth}{150mm} \setlength{\oddsidemargin}{7mm}
\setlength{\evensidemargin}{7mm} \setlength{\topmargin}{-5mm}
\setlength{\textheight}{245mm} \topmargin -18mm

\newtheorem{definition}{Definition} [section]
\newtheorem{theorem}[definition]{Theorem}
\newtheorem{lemma}[definition]{Lemma}

\newtheorem{claim}[definition]{Claim}

\theoremstyle{remark}
\newtheorem{remark}[definition]{Remark}



\newcommand{\B}[1]{\mathbf{#1}}

\def\PP{\mathbb{P}}

\newcommand{\dd}{\mathrm{d}}
\newcommand{\hide}[1]{}

\newcommand{\vol}{\operatorname{vol}}

\newcommand{\me}{\mathrm{e}}

\setlength{\parskip}{2mm}
\setlength{\parindent}{3mm}
\begin{document}
\title{\bf\Large Covering large-dimensional Euclidean spaces by random translates of a given convex body}
\date{\today}
\author{Boris Bukh\thanks{Department of Mathematical Sciences, Carnegie Mellon University, USA. Research supported in part by a Simons Foundation Fellowship, and NSF grants DMS-2154063, DMS-2452120. Email: \texttt{bbukh@math.cmu.edu}}
\and Jun Gao\thanks{Mathematics Institute and DIMAP, University of Warwick, Coventry, UK. Research supported by ERC Advanced Grant 101020255. Email: \texttt{\{jun.gao,o.pikhurko,shumin.sun\}@warwick.ac.uk}}
\and Xizhi Liu \thanks{School of Mathematical Sciences, USTC, Hefei, China. Research supported by ERC Advanced Grant 101020255 and the Excellent Young Talents Program (Overseas) of the National Natural Science Foundation of China. Email: \texttt{liuxizhi@ustc.edu.cn}}
\and Oleg Pikhurko\footnotemark[2] 
\and Shumin Sun\footnotemark[2]}







%
\maketitle
\begin{abstract}

Determining the minimum density of a covering of $\mathbb{R}^{n}$ by Euclidean unit balls as $n\to\infty$ is a major open problem, with the best known results being the lower bound of $\left(\mathrm{e}^{-3/2}+o(1)\right)n$ by Coxeter, Few and Rogers~[\emph{Mathematika} \textbf{6}, 1959] and the upper bound of $\left(1/2+o(1) \right)n \ln n$ by Dumer~[\emph{Discrete Comput.\ Geom.} \textbf{38}, 2007]. 

We prove that there are ball coverings of $\mathbb{R}^n$ attaining the asymptotically best known density $\left(1/2+o(1) \right)n \ln n$ such that, additionally, every point of $\mathbb{R}^n$ is covered at most $\left(1.79556... + o(1)\right) n \ln n$ times. This strengthens the result of Erd\H{o}s and Rogers [\emph{Acta Arith.}\ \textbf{7}, 1961/62] who had the maximum multiplicity at most $\left(\mathrm{e} + o(1)\right) n \ln n$.

On the other hand, we show that the method that was used for the best known ball coverings (when one takes a random subset of centres in a fundamental domain of a suitable lattice in $\mathbb{R}^n$ and extends this periodically) fails to work if the density is less than $(1/2+o(1))n\ln n$; in fact, this result remains true if we replace the ball by any convex body~$K$. Also, we observe that a ``worst'' convex body $K$ here is a cube, for which the packing density coming from random constructions is only $(1+o(1))n\ln n$.
\end{abstract}

\section{Introduction}\label{SEC:Introduction}
The fundamental result of Rogers~\cite{Rog57A} states that, for every \emph{convex body} $K$ (i.e.\ a compact convex set with non-empty interior) in the $n$-dimensional Euclidean space $\mathbb{R}^n$, there exists a covering of $\mathbb{R}^{n}$ by translates of $K$ with density at most $(1 + o(1)) n \ln n$ as $n\to\infty$. If $K$ is a ball then the covering density was improved  to $(1/2 + o(1)) n \ln n$ by Dumer~\cite{07D} while the best known lower bound is $\left(\mathrm{e}^{-3/2} + o(1)\right)n$ by Coxeter, Few and Rogers~\cite{CFR59}.
Closing this gap for ball covering is a major open problem. 

In brief, the new results proved in this paper are as follows. First, Theorem~\ref{THM:covering-multiplicity} proves that  there are ball coverings of $\mathbb{R}^n$ attaining the asymptotically best known density $\left(1/2+o(1) \right)\cdot n \ln n$ such that, additionally, every point of $\mathbb{R}^n$ is covered at most $\left(1.79556... + o(1)\right) n \ln n$ times. This strengthens a result of Erd\H{o}s and Rogers~\cite{62ER} who had the maximum multiplicity at most $\left(\mathrm{e} + o(1)\right) n \ln n$. Also, Theorem~\ref{THM:poisson-process-lower-bound} shows that, for any given convex body $K\subseteq \mathbb{R}^n$, the method that was used in~\cite{Rog57A,07D}
(when one takes a random subset of centres in a fundamental domain of a suitable lattice in $\mathbb{R}^n$ and extends this periodically) fails with high probability to produce a $K$-covering if the appropriately defined density is strictly less than $(1/2+o(1))n\ln n$. 
Finally, Theorem~\ref{THM:poisson-process-lower-bound-cube}  gives that, somewhat surprisingly,  a ``worst'' convex body $K$ for this method is a cube, for which the packing density coming from random constructions is only $(1+o(1))n\ln n$ (which matches the bound of Rogers~\cite{Rog57A}).


Let us provide the details. 
%
Let 
 $$
 B_r^n(\B x)\coloneqq\{\B y\in \mathbb{R}^{n}\colon \|\mathbf{x}-\mathbf{y}\|\le r\}
 $$
 denote the closed ball of radius $r$ centred at $\B x$, where $\|\cdot\|$ denotes the Euclidean norm.
When $\mathbf{x} = \mathbf{0}$ is the origin of $\mathbb{R}^n$, we write $B_{r}^{n}$ for brevity. Let $\vol(\cdot)$ denote the Lebesgue measure on~$\mathbb{R}^n$. 
Also, we define $\nu_{n}\coloneqq\vol(B_1^n)$ to be  the volume of the $n$-dimensional unit ball $B_{1}^{n}$. 

Let $K \subseteq \mathbb{R}^{n}$ be a convex body. 
A (discrete) set $P \subseteq \mathbb{R}^{n}$ is called a \emph{$K$-covering} of a set $A \subseteq \mathbb{R}^{n}$ if $A$ is a subset of 
\begin{align*}
   P + K 
    \coloneqq \left\{ \mathbf{p} + \mathbf{x} \colon \text{$\mathbf{p} \in P$ and $\mathbf{x} \in K$} \right\}, 
\end{align*}
 that is, the union of the translates of $K$ by the elements of $P$ contains every point of $A$.
For a measurable $A\subseteq \mathbb{R}^n$ of finite measure, 
the \emph{covering density} of a set $P\subseteq \mathbb{R}^n$ over $A$ is defined as 
\begin{align*}
    \vartheta_{K}(P, A)
    \coloneqq \sum_{\mathbf{p} \in P} \frac{\vol\big( A\cap (K+\mathbf{p}) \big)}{\vol(A)}. 
\end{align*}
The \emph{covering multiplicity of a point} $\mathbf{x} \in A$ is defined as 
\begin{align*}
    \mu_{K}(P, \mathbf{x})
    \coloneqq \left| \left\{ \mathbf{p} \in P \colon \mathbf{x} \in K + \mathbf{p} \right\} \right|. 
\end{align*}
The \emph{covering multiplicity of} $P$ is then defined as
\begin{align*}
    \mu_{K}(P, A)
    \coloneqq \sup\left\{ \mu_{K}(P, \mathbf{x}) \colon \mathbf{x} \in A\right\},
\end{align*}
i.e.,\ the maximum number of times that a point of $A$ is covered by the translates of $K$ by the elements of $P$. The \emph{$K$-covering density} and \emph{$K$-covering multiplicity} of $P$ over $\mathbb{R}^{n}$ are defined by 
\begin{align*}
    \vartheta_K(P)
    \coloneqq \limsup_{r\to \infty} \vartheta_K\big(P, [-r,r]^n\big) 
    \quad\text{and}\quad 
    \mu_K(P) 
    \coloneqq \sup_{r \to \infty} \mu_K\big(P, [-r,r]^n\big). 
\end{align*}

In the case when $K$ is the unit ball $B_1^n$, we may omit the subscript $K$ and write just $\vartheta(P, A)$, $\mu(P, \mathbf{x})$, $\mu(P, A)$, etc.
%
%
%
Thus, for example,
Dumer's result~\cite{07D} gives a unit ball covering $P$ of $\mathbb{R}^n$  with $\vartheta(P)\le \left(1/2+o(1)\right)n \ln n$. Our first result shows that, additionally, one can assume that the covering multiplicity of $P$ is at most $(1.795...+o(1))n\ln n$, improving on the previous multiplicity bound $(\mathrm{e}+o(1))n\ln n$ of Erd\H{o}s and Rogers~\cite{62ER}, where $\mathrm{e}=2.718...$ is the base of the natural logarithm.
\begin{theorem}\label{THM:covering-multiplicity}
    There exists a unit ball covering $P$ of $\mathbb{R}^{n}$ such that 
    \begin{align*}
        \vartheta(P) \le \left(\frac{1}{2} + o(1)\right) n \ln n
        \quad\text{and}\quad 
        \mu(P) \le \big( \xi + o(1) \big) n \ln n,  
    \end{align*}
    where $\xi = 1.79556...$ is the unique (positive) real root of 
    \begin{align}\label{equ:xi-def}
         \ln \left(\frac{2x}{\me}\right) = \frac{1}{2x}. 
    \end{align}
\end{theorem}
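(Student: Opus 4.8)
The plan is to follow the standard random-construction strategy (as in Rogers~\cite{Rog57A} and Dumer~\cite{07D}) but to track the maximum multiplicity simultaneously with the density. Concretely, I would work inside a large torus $\mathbb{T}^n = \mathbb{R}^n/\Lambda$ for a suitable lattice $\Lambda$ whose fundamental domain has volume $V$, and choose centres according to a Poisson point process of intensity $\lambda = c\, n\ln n / \nu_n$ for a constant $c$ slightly larger than $1/2$ (the value $c=1/2$ being Dumer's threshold). In Dumer's construction one does not cover with plain balls but with a thin spherical shell or a cleverly positioned cap configuration to gain the factor $1/2$; I would take that same geometric gadget as a black box, so that a single ``token'' placed at a Poisson point covers a region of volume $(1+o(1))\,\tfrac12\nu_n$ but the relevant covering events at a fixed point $\mathbf x$ are governed by an effective volume giving coverage probability $1-o(1/V)$ when $c>1/2$. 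A union bound over a fine net in $\mathbb{T}^n$ then yields a covering with density $(c+o(1))n\ln n$ with high probability.

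The new ingredient is controlling $\mu(P)$. For a fixed point $\mathbf x$, the number of process points $\mathbf p$ with $\mathbf x \in K+\mathbf p$ is Poisson with mean $m \coloneqq \lambda \nu_n = c\,n\ln n$ (using the full ball volume here, since multiplicity counts \emph{all} translates containing $\mathbf x$, not just the shell gadget). I want to show that with high probability no point is covered more than $(\xi+o(1))n\ln n = (2c\cdot\text{(something)})$ times; more precisely, I would pick $c = 1/2+o(1)$ and show the max multiplicity is $(\xi+o(1))n\ln n$ where $\xi$ is exactly the constant that makes a Chernoff/union-bound argument tight. By the Poisson tail bound, $\Pr[\mathrm{Poisson}(m) \ge t m] \le e^{-m(t\ln t - t + 1)}$. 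Union-bounding over a net of $\mathrm{poly}(n)\cdot V^{O(1)}$ points (it suffices to discretise at scale $1/\mathrm{poly}(n)$ since a point covered $k$ times has a neighbourhood of radius $\Omega(1/n)$ covered nearly $k$ times), the bad event has probability $o(1)$ provided $m(t\ln t - t +1) > (n+o(n))\ln V$, i.e.\ roughly $t\ln t - t + 1 > \tfrac{1}{c}$ after absorbing lower-order terms. Setting $m = \tfrac12 n\ln n$ and the target multiplicity $tm = \xi n\ln n$, so $t = 2\xi$, the threshold becomes $t\ln t - t + 1 = \tfrac1c \to 2$, i.e.\ $t\ln t - t = 1$, equivalently $\ln(t/\mathrm e) = 1/t$; substituting $t = 2x$ this is precisely \eqref{equ:xi-def}, so $\xi = x$ is the claimed root $1.79556\ldots$.

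The remaining work is to make these two demands compatible: I must verify that Dumer's density-$(1/2+o(1))n\ln n$ construction can be carried out \emph{while} conditioning on (or intersecting with) the favourable multiplicity event, and that the $o(1)$ error terms from the net size, the torus-to-$\mathbb{R}^n$ periodisation, and the geometry of the shell gadget do not interact badly. The cleanest route is: run the Poisson process; delete every process point lying in a ``heavily used'' region (those would-be high-multiplicity spots); argue the deletions are so sparse (probability $o(1)$ of any deletion in a net point's neighbourhood, by the computation above) that they do not destroy the covering property; then periodise. One then checks $\vartheta(P) \le (1/2+o(1))n\ln n$ survives the deletion since we removed a $o(1)$-fraction of points, and $\mu(P) \le (\xi+o(1))n\ln n$ by construction.

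The main obstacle I anticipate is the bookkeeping where Dumer's factor-$1/2$ gadget and the multiplicity count are measured against \emph{different} effective volumes ($\tfrac12\nu_n$ for covering, $\nu_n$ for multiplicity), so one must be careful that the intensity $\lambda$ chosen to make covering work ($\lambda\nu_n = \tfrac12 n\ln n$) is exactly the one feeding the multiplicity tail bound — and that the ``$1/(2c)$'' in the exponent threshold is genuinely $1$, not something worse, once the geometry is accounted for honestly. A secondary subtlety is justifying that discretising multiplicity at a polynomial scale loses only $o(n\ln n)$ in the bound; this needs a Lipschitz-type estimate on how fast $\mu_K(P,\cdot)$ can change, using that each translate of $K$ has inradius bounded below after an affine normalisation of $K$.
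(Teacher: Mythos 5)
Your multiplicity analysis is essentially the paper's: the number of centres within distance $1$ of a fixed point is Poisson with mean $(\tfrac12+o(1))n\ln n$, the Chernoff exponent $t\ln t-t+1$ is set equal to $2$ to beat a union bound over $\me^{(1+o(1))n\ln n}$ net points, and with $t=2\xi$ this is exactly~\eqref{equ:xi-def}. That part is correct, and the Lipschitz/discretisation worry you raise is handled in the paper simply by measuring multiplicity at a net point $\mathbf z$ within $\varepsilon=1/(n\ln n)$ of the worst point and counting centres in $B^n_{1+\varepsilon}(\mathbf z)$, which costs only a factor $(1+\varepsilon)^n=1+o(1)$ in the mean.

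The genuine gap is in the covering step. You defer the factor $1/2$ to ``Dumer's gadget'' used as a black box, but (i) no such gadget is compatible with your set-up: the theorem requires a covering by \emph{unit balls centred at the process points}, and your multiplicity bound is computed for exactly that configuration, so you cannot simultaneously replace the balls by shells or cap configurations when proving coverage; and (ii) with plain balls your proposed ``union bound over a fine net'' fails at intensity $c\,n\ln n/\nu_n$ for $c$ near $1/2$: a single point is uncovered with probability $\me^{-c n\ln n}=n^{-cn}$, while any net fine enough to certify coverage has $n^{(1+o(1))n}$ points, so the naive union bound only works for $c>1$ (this is precisely why Rogers' method gives constant $1$, and Theorem~\ref{THM:poisson-process-lower-bound} shows a single-scale first-moment argument cannot do better than certify what actually happens at $1/2$). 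The missing idea, which is the heart of the paper's proof of Theorem~\ref{THM:poisson-process-upper-bound}, is a \emph{two-scale} argument: one takes a coarse net $P_\mu$ at scale $\mu=n^{-1/2-\delta/4}$, which has only $n^{(1/2+\delta/4+o(1))n}$ points, so at intensity $(\tfrac12+\delta)n\ln n/\nu_n$ one can afford the union bound for the much stronger event that every coarse-net point has $\Omega(n\ln n)$ centres within distance $1-\varepsilon$ (``saturation''); one then transfers saturation to the fine net $P_\varepsilon$ by noting that if $\mathbf z\in P_\varepsilon$ has \emph{no} centre within $1-\varepsilon$ while its coarse neighbour $\phi(\mathbf z)$ is saturated, then $\Omega(n\ln n)$ independent centres must all land in the lens $B^n_{1-\varepsilon}(\phi(\mathbf z))\setminus B^n_{1-\varepsilon}(\mathbf z)$ of volume $O(n^{-\delta/8})\nu_n$ (Lemma~\ref{LEMMA:volume-intersection-sphere}), an event of probability $n^{-\Omega(n\ln n)}$, small enough to union-bound over the fine net. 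Without this (or an equivalent mechanism) your argument proves coverage only at density $(1+o(1))n\ln n$, not $(\tfrac12+o(1))n\ln n$. A further, minor omission: to pass from the torus to $\mathbb{R}^n$ one needs a packing lattice of $B^n_2$ with covolume at most $M^n\nu_n$ (Minkowski--Hlawka suffices), and the paper's argument needs no deletion step --- all three events hold simultaneously with high probability.
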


Recall that a \emph{lattice} $\Lambda\subseteq \mathbb{R}^{n}$ is a set of the form
\begin{align*}
    \Lambda
    = \left\{\lambda_1 \mathbf{b}_{1} + \cdots + \lambda_n \mathbf{b}_{n} \colon \text{$\lambda_i \in \mathbb{Z}$ for $i \in [n]$}\right\},
\end{align*}
 for some linearly independent vectors $\mathbf{b}_{1}, \ldots, \mathbf{b}_{n}  \in \mathbb{R}^{n}$, where we use the shorthand $[n] \coloneqq\{1,\dots,n\}$.
For a convex body $K\subseteq \mathbb{R}^n$, we call a lattice $\Lambda \subseteq \mathbb{R}^{n}$ a \emph{packing lattice} of $K$ if, for every distinct elements $\B p,\B p'\in P$, the translates $K+\B p$ and $K+\B p'$ are disjoint.  
The corresponding quotient $\mathbb{T}_{\Lambda}\coloneqq\mathbb{R}^n/\Lambda$ (of the topological Abelian group $(\mathbb{R}^n,+)$ by the discrete subgroup $\Lambda$) is then called a \emph{packing torus} of~$K$. Slightly abusing notation, we  use $\vol(\cdot)$ also to denote the measure on $\mathbb{T}_{\Lambda}$ obtained by identifying the torus with a fundamental domain of~$\Lambda$ and using the Lebesgue measure on the latter. Likewise, we may use the same symbol $A$ to refer to the projection of $A\subseteq \mathbb{R}^n$ to $\mathbb{T}$, etc.

Recall that a discrete random variable $\mathbf{X}$ has the \emph{Poisson distribution with mean $\lambda > 0$}, denoted $\mathbf{X} \sim \mathrm{Pois}(\lambda)$, if for each integer $k\ge 0$ the probability of $\mathbf{X}$ assuming value $k$ is ${\lambda^{k} \me^{-\lambda}}{k!}$.
\hide{
\begin{align*}
    \mathbb{P}\left[\,\mathbf{X} = k\,\right] = \frac{\lambda^{k} \me^{-\lambda}}{k!}, 
    \quad\text{for}\quad k \in \mathbb{N}. 
\end{align*}
}%
A random set $\mathbf{X} \subseteq A$ is said to be sampled from the \emph{Poisson point process} on measurable subset $A\subseteq \mathbb{R}^n$ with intensity $\rho$ if the following conditions hold:
\begin{enumerate}[label=(\roman*)]
    \item For every bounded measurable set $S \subseteq A$, the number of points in $S$, that is, $|\mathbf{X} \cap S|$, has the Poisson distribution with mean $\rho \cdot \vol(S)$. 
    \item For any disjoint bounded measurable sets $S_1,\dots,S_k\subseteq A$, the random variables $|\mathbf{X} \cap S_1|,\dots,|\mathbf{X} \cap S_k|$ are mutually independent.
\end{enumerate}
 Such a distribution is known to exist and to be uniquely determined by the above two properties.

As in many lower-bound constructions, the covering set $P \subseteq \mathbb{R}^{n}$ that we construct will be
of the form $\Lambda + X$, where $\Lambda \subseteq \mathbb{R}^{n}$ is a suitable lattice and $X$ is a typical (under some distribution) finite subset  of  the corresponding torus $\mathbb{T}_{\Lambda} \coloneqq \mathbb{R}^{n}/\Lambda$. Specifically,
Theorem~\ref{THM:covering-multiplicity} will follow from the following result. 
\begin{theorem}\label{THM:poisson-process-upper-bound}
    Let $0<\delta< 1$ be a real number and $M>0$ be a constant. Suppose that $\mathbb{T}$ is a packing torus of $B_{2}^{n}$ with $\vol(\mathbb{T}) \le M^n \nu_{n}$. 
    Suppose that the set $\mathbf{X}$ is sampled from the Poisson point process on $\mathbb{T}$ with intensity  
    \begin{align}\label{equ:rho-in-THM}
        \rho \coloneqq \left(\frac{1}{2} + \delta \right) \frac{n \ln n}{\nu_{n}}. 
    \end{align}
    Then the probability that all the following events occur tends to $1$ as $n \to \infty$: 
    \begin{enumerate}[label=(\roman*)]
        \item\label{THM:poisson-process-upper-bound-a} $\mathbb{T} \subseteq \mathbf{X} + B_{1}^{n}$, 
        \item\label{THM:poisson-process-upper-bound-b} $\vartheta(\mathbf{X}, \mathbb{T}) \le \left(\frac{1}{2} + 2\delta \right) n \ln n$, and 
        \item\label{THM:poisson-process-upper-bound-c} $\mu(\mathbf{X}) \le \left(\xi + 10 \delta \right) n \ln n$, where $\xi = 1.79556...$ is the unique real root of~\eqref{equ:xi-def}. 
    \end{enumerate}  
\end{theorem}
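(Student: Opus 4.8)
The plan is to prove the three conclusions separately; (i) is by far the hardest.

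\textbf{Conclusion (ii) (density).} Since $\mathbb T$ is a packing torus of $B_2^n\supseteq B_1^n$, every translate $B_1^n+\mathbf p$ with $\mathbf p\in\mathbb T$ injects into $\mathbb T$ with full volume $\nu_n$, so $\vartheta(\mathbf X,\mathbb T)=|\mathbf X|\,\nu_n/\vol(\mathbb T)$, where $|\mathbf X|\sim\mathrm{Pois}(\rho\vol(\mathbb T))$ has mean $\rho\vol(\mathbb T)$ and hence $\mathbb E\,\vartheta(\mathbf X,\mathbb T)=\rho\nu_n=(\tfrac12+\delta)n\ln n$. Because $\rho\vol(\mathbb T)$ is exponentially large, a Chernoff bound for Poisson variables gives $|\mathbf X|\le(1+\delta)\rho\vol(\mathbb T)$ with probability $1-o(1)$, and $(1+\delta)(\tfrac12+\delta)\le\tfrac12+2\delta$ for $\delta\le\tfrac12$ (the case $\delta\ge\tfrac12$ being trivial for the stated bounds).

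\textbf{Conclusion (iii) (multiplicity).} Here I would use a fine net. Put $\beta=1/(n\ln n)$ and let $N\subseteq\mathbb T$ be a maximal $\beta$-separated set; balls of radius $\beta/2$ about its points are disjoint, so $|N|\le\vol(\mathbb T)/(\nu_n(\beta/2)^n)\le(2M/\beta)^n=\exp\big((1+o(1))n\ln n\big)$. For $\mathbf x\in\mathbb T$ pick $\mathbf y\in N$ with $\|\mathbf x-\mathbf y\|\le\beta$; then $B_1^n(\mathbf x)\subseteq B_{1+\beta}^n(\mathbf y)\subseteq B_2^n(\mathbf y)$, the latter injecting into $\mathbb T$, so $\mu(\mathbf X,\mathbf x)\le|\mathbf X\cap B_{1+\beta}^n(\mathbf y)|\sim\mathrm{Pois}\big(\rho\nu_n(1+\beta)^n\big)$, a Poisson variable of mean $(\tfrac12+\delta)(1+o(1))n\ln n$. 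By $\mathbb P[\mathrm{Pois}(\lambda)\ge s]\le e^{-\lambda}(e\lambda/s)^s$, the chance that a fixed $\mathbf y\in N$ has $|\mathbf X\cap B_{1+\beta}^n(\mathbf y)|\ge(\xi+10\delta)n\ln n$ is at most $\exp\!\big((1+o(1))n\ln n\cdot\big(-(\tfrac12+\delta)+(\xi+10\delta)-(\xi+10\delta)\ln\tfrac{\xi+10\delta}{1/2+\delta}\big)\big)$, and a union bound over $N$ succeeds once $1-(\tfrac12+\delta)+(\xi+10\delta)-(\xi+10\delta)\ln\tfrac{\xi+10\delta}{1/2+\delta}<0$, i.e.\ once $(\xi+10\delta)\ln\tfrac{\xi+10\delta}{e(1/2+\delta)}>\tfrac12-\delta$. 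At $\delta=0$ this is exactly the equality $\xi\ln(2\xi/e)=\tfrac12$, which is \eqref{equ:xi-def}; computing the $\delta$-derivative of both sides at $0$ shows the strict inequality holds for all small $\delta>0$ with margin $\Theta(\delta)$, which dominates the $o(1)\cdot n\ln n$ errors (and one checks $\xi+10\delta>(\tfrac12+\delta)(1+o(1))$ so the tail bound is in the right regime). This is where the constant $\xi$ comes from.

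\textbf{Conclusion (i) (covering).} The net argument fails here: to control an uncovered point through an emptied ball $B_{1-\beta}^n(\mathbf y)$ one needs $\beta$ so tiny that $(1-\beta)^n\approx1$, and then $|N|$ overwhelms the probability $e^{-\rho\nu_n(1-\beta)^n}$. Instead I would reduce ``$\mathbb T\not\subseteq\mathbf X+B_1^n$'' to the existence of an uncovered \emph{vertex} of the arrangement of spheres $\{\partial B_1^n(\mathbf p):\mathbf p\in\mathbf X\}$: the uncovered set $\mathbb T\setminus(\mathbf X+B_1^n)$ is open, and if non-empty one maximises a generic linear functional over its (compact) closure and performs a simplex-type descent along the spheres, arriving at a point $\mathbf x^*$ lying on $n$ of the spheres — so determined, up to a binary choice, by an $n$-subset $\{\mathbf p_1,\dots,\mathbf p_n\}\subseteq\mathbf X$ — with $\mathrm{int}\,B_1^n(\mathbf x^*)\cap\mathbf X=\emptyset$. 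The expected number of such configurations is evaluated by the multivariate Mecke (Palm) formula and equals $\tfrac1{n!}\rho^n e^{-\rho\nu_n}\int_{\mathbb T^n}V(\mathbf y_1,\dots,\mathbf y_n)\,d\mathbf y$, where $V(\mathbf y)=\#\{\mathbf x:\|\mathbf x-\mathbf y_i\|=1\ \forall i\}\le2$ and $e^{-\rho\nu_n}$ is the probability that the open ball is empty (the added boundary points $\mathbf y_i$ do not lie in it). By translation invariance $\int_{\mathbb T^n}V\,d\mathbf y=\vol(\mathbb T)\,\mathcal V_n$ with $\mathcal V_n=\int_{(\mathbb R^n)^{n-1}}\#\{\mathbf x\in\partial B_1^n:\|\mathbf x-\mathbf z_i\|=1\}\,d\mathbf z$, and the area formula applied to $(\mathbf u,\omega_2,\dots,\omega_n)\mapsto(\mathbf u+\omega_2,\dots,\mathbf u+\omega_n)$ on $(S^{n-1})^n$ gives $\mathcal V_n=(n\nu_n)^n\,\mathbb E\big[\,|\langle\mathbf u,\mathbf w\rangle|\cdot\mathrm{Vol}_{n-1}(\omega_2,\dots,\omega_n)\big]$ for independent uniform $\mathbf u,\omega_i\in S^{n-1}$, $\mathbf w$ a unit normal to $\mathrm{span}\{\omega_i\}$. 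Using $\mathbb E[\mathrm{Vol}_{n-1}^2]=n!/n^{n-1}=e^{-n+o(n)}$ with Cauchy--Schwarz, $n!^{-1}(n\nu_n)^n\rho^n=(\sqrt e\,\rho\nu_n)^n e^{o(n)}$, and $\vol(\mathbb T)\le M^n\nu_n=\exp(-\tfrac n2\ln n+O(n))$, the whole expectation is at most $(\sqrt e\,\rho\nu_n)^n\vol(\mathbb T)\,e^{-\rho\nu_n}\,e^{o(n)}=\exp\big(-\delta\,n\ln n+n\ln\ln n+O(n)\big)\to0$, which is precisely the point where $\delta>0$ is used.

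The main obstacle is this last part, and specifically its two ingredients: making the ``uncovered $\Rightarrow$ some uncovered vertex'' descent rigorous (degenerate configurations occur with probability $0$ for the Poisson process, but one must still argue the descent terminates at a vertex), and carrying out the Jacobian/area-formula evaluation of $\mathcal V_n$ together with the estimate of $\mathbb E[\mathrm{Vol}_{n-1}]$. The remaining points — that $\mathbf X$ is a.s.\ finite so all arrangements are finite, and that the requirement that $\mathbb T$ be a packing torus of $B_2^n$ is exactly what makes every ball of radius up to $2$ inject and hence makes the Poisson volume bookkeeping correct — are routine.
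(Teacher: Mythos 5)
Your parts (ii) and (iii) are correct and essentially identical to the paper's argument: (ii) is the same Poisson--Chernoff bound on $|\mathbf{X}|$, and (iii) is the same union bound over an $\varepsilon$-net with $\varepsilon=1/(n\ln n)$, where your condition $(\xi+10\delta)\ln\frac{\xi+10\delta}{e(1/2+\delta)}>\frac12-\delta$ is exactly the paper's computation with $\sigma=\xi_0=2\xi-1$ and reduces at $\delta=0$ to~\eqref{equ:xi-def}.

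For (i) your route is genuinely different (a first-moment/Mecke computation over uncovered vertices of the sphere arrangement, in the spirit of Janson-type coverage thresholds), and the exponent bookkeeping at the end is consistent, but the reduction ``$\mathbb{T}\not\subseteq\mathbf{X}+B_1^n$ implies an uncovered point lying on $n$ spheres'' is false as stated, not merely unpolished. A component of the vacant set can be bounded by fewer than $n$ spheres (trivially when $\mathbf{X}$ has few points, but also for dense generic configurations: the closure of a vacant component need not meet any stratum of codimension $n$), and a ``generic linear functional'' is not even continuous on the torus. The correct statement must enumerate, for each $k=0,1,\dots,n$, uncovered points lying on exactly $k$ spheres that are critical for a suitable function restricted to the $(n-k)$-dimensional stratum, and each $k<n$ requires its own moment computation (the extremality condition is a codimension-$(n-k)$ constraint whose probabilistic cost must be quantified). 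Since this is the entire content of conclusion (i), the proof is incomplete. The paper avoids all of this with an elementary two-scale net argument that you dismissed too quickly: alongside the fine net $P_\varepsilon$ ($\varepsilon=1/(n\ln n)$) it takes a coarse net $P_\mu$ with $\mu=n^{-1/2-\delta/4}$, for which $|P_\mu|\approx n^{(1/2+\delta/4)n}$ is small enough that, by a union bound, every coarse-net point whp has at least $\frac{\beta}{2}n\ln n$ points of $\mathbf{X}$ in its radius-$(1-\varepsilon)$ ball (``saturation''). It then transfers coverage to the fine net: if some $\mathbf{z}\in P_\varepsilon$ had $B_{1-\varepsilon}^n(\mathbf{z})\cap\mathbf{X}=\emptyset$ while its nearest coarse-net point is saturated, then $\frac{\beta}{2}n\ln n$ points would all lie in the lens $B_{1-\varepsilon}^n(\phi(\mathbf{z}))\setminus B_{1-\varepsilon}^n(\mathbf{z})$, whose volume is only $O(n^{-\delta/8})\nu_n$; this has probability $n^{-\Omega(n\ln n)}$, which beats the union bound over the fine net. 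You would do well to adopt this mechanism, or else carry out the full stratified vertex analysis.
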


The derivation of Theorem~\ref{THM:covering-multiplicity} from Theorem~\ref{THM:poisson-process-upper-bound} is straightforward, given the known results on the existence of reasonably good packing lattices for balls, for which e.g.\ the classical Minkowski--Hlawka theorem suffices (and we refer the reader to Klartag~\cite{klar2025} for the best known bounds in high dimensions). Also, note  the result of Rogers~\cite{R47_sphere} that, for any convex body $K\subseteq \mathbb{R}^n$,
there exists a packing lattice $\Lambda$ of $K$ such that $\vol\big(\mathbb{T}_{\Lambda}\big) \le 4^n\cdot \vol(K)$.

Our next result shows that the constant $1/2$ in~\eqref{equ:rho-in-THM} cannot be improved using this approach, in fact not only for a ball but for any convex body.
\hide{To state the result, we require some additional definitions. A lattice $\Lambda \subseteq \mathbb{R}^{n}$ is called a \emph{packing lattice of} $K$ if for every pair of distinct points $\mathbf{p}, \mathbf{p}' \in \Lambda$,  
\begin{align*}
    \big( K + \mathbf{p} \big) \cap \big( K + \mathbf{p}' \big) = \emptyset. 
\end{align*}
The torus $\mathbb{T}_{\Lambda}\coloneqq\mathbb{R}^n/\Lambda$ is then called a \emph{packing torus} of $K$. 
}

\begin{theorem}\label{THM:poisson-process-lower-bound}
    Let $\delta > 0$ be a real number. 
    Let $K \subseteq \mathbb{R}^{n}$ be a convex body and $\mathbb{T}$ be a packing torus of $K-K:=\{\mathbf{x}-\mathbf{y}\colon \mathbf{x},\mathbf{y}\in K\}$. 
    Suppose that $\mathbf{X} \subseteq \mathbb{T}$ is sampled from the Poisson point process with intensity 
    \begin{align*}
        \rho \coloneqq \frac{1}{\vol(K)}\left(\frac{n \ln n}{2} - (1+ \delta) n \ln\ln n \right). 
    \end{align*}
    Then the probability that $\mathbb{T} \subseteq \mathbf{X} + K$ tends to $0$ as $n\to \infty$.
\end{theorem}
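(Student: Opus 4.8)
The plan is to show that, with the stated (slightly subcritical) intensity $\rho$, a typical Poisson sample $\mathbf X$ leaves some point of $\mathbb T$ uncovered by $\mathbf X+K$, with probability tending to $1$. I would use a second-moment / Paley--Zygmund argument on the "uncovered volume" functional. Fix a point $\mathbf y\in\mathbb T$ and note that $\mathbf y\notin \mathbf X+K$ exactly when $\mathbf X\cap(\mathbf y-K)=\emptyset$; since $|\mathbf X\cap(\mathbf y-K)|\sim\mathrm{Pois}(\rho\vol(K))$, this event has probability $e^{-\rho\vol(K)}$. With the given $\rho$,
\begin{align*}
\rho\vol(K)=\frac{n\ln n}{2}-(1+\delta)n\ln\ln n,
\qquad\text{so}\qquad
e^{-\rho\vol(K)}=n^{-n/2}(\ln n)^{(1+\delta)n}.
\end{align*}
Let $W\coloneqq\vol\{\mathbf y\in\mathbb T:\mathbf y\notin\mathbf X+K\}$ be the uncovered volume. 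Then $\mathbb E[W]=\vol(\mathbb T)\cdot e^{-\rho\vol(K)}$. The covering event $\{\mathbb T\subseteq\mathbf X+K\}$ is exactly $\{W=0\}$, so it suffices to prove $\PP[W=0]\to 0$, for which the standard route is $\PP[W=0]\le 1-(\mathbb E W)^2/\mathbb E[W^2]$ together with $\mathbb E[W^2]\le(1+o(1))(\mathbb E W)^2$.

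For the second moment, $\mathbb E[W^2]=\int_{\mathbb T}\int_{\mathbb T}\PP[\mathbf y\notin\mathbf X+K,\ \mathbf z\notin\mathbf X+K]\,\dd\mathbf y\,\dd\mathbf z$, and by the defining properties of the Poisson point process this joint non-covering probability equals $\exp\!\big(-\rho\vol\big((\mathbf y-K)\cup(\mathbf z-K)\big)\big)=\exp\!\big(-\rho(2\vol(K)-\vol((\mathbf y-K)\cap(\mathbf z-K)))\big)$. Hence
\begin{align*}
\frac{\mathbb E[W^2]}{(\mathbb E W)^2}
=\frac{1}{\vol(\mathbb T)^2}\int_{\mathbb T}\int_{\mathbb T}\exp\!\big(\rho\,\vol((\mathbf y-K)\cap(\mathbf z-K))\big)\,\dd\mathbf y\,\dd\mathbf z
=\Exp_{\mathbf y,\mathbf z}\big[\exp(\rho\,g(\mathbf y-\mathbf z))\big],
\end{align*}
where $g(\mathbf t)\coloneqq\vol\big(K\cap(K+\mathbf t)\big)$ is the autocorrelation of $K$ and $\mathbf y-\mathbf z$ is uniform on $\mathbb T$. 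So the whole theorem reduces to showing that $\Exp[\exp(\rho\, g(\mathbf U))]=1+o(1)$ for $\mathbf U$ uniform on $\mathbb T$. Here the hypothesis that $\mathbb T$ is a packing torus of $K-K$ is exactly what we need: the translates $\{(\mathbf t+(K-K)):\mathbf t\in\Lambda\}$ are pairwise disjoint, which guarantees that $g(\mathbf U)>0$ only on a set of measure at most $\vol(K-K)\le 4^n\vol(K)$ (by Rogers--Shephard $\vol(K-K)\le\binom{2n}{n}\vol(K)$, and for our purposes even a crude bound suffices), so the "dangerous" region where the integrand is large is exponentially small inside $\mathbb T$ — and $\vol(\mathbb T)$ is itself at most $4^n\vol(K)$-ish if we chose it tight, but in any case $g\le\vol(K)$ always. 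Split the expectation: on the event $g(\mathbf U)\le \vol(K)/(2\rho\vol(K))\cdot$(something), the integrand is $O(1)$; on the complementary event the integrand is at most $e^{\rho\vol(K)}=n^{n/2}(\ln n)^{-(1+\delta)n}$ but that event has probability at most $\vol(K-K)/\vol(\mathbb T)$. The key quantitative point is that the bad contribution is (bad probability)$\times$(max integrand), and one checks this is $o(1)$ precisely because $e^{-\rho\vol(K)}$ beats $\vol(K)/\vol(\mathbb T)$ by the margin $(\ln n)^{(1+\delta)n}/(\ln n)^{n}$ coming from the $\ln\ln n$ term — this is where the $(1+\delta)n\ln\ln n$ correction is used and why criticality sits exactly at $\tfrac12 n\ln n$.

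More carefully, to kill the middle range of $g$ one should stratify by the value of $g(\mathbf U)$: for a threshold parameter, use that $\{\mathbf t:g(\mathbf t)\ge c\,\vol(K)\}$ has volume at most $\vol(K-K)$ for any $c>0$, but one needs a better decay as $c\to 1$. In fact $\{\mathbf t: g(\mathbf t)\ge(1-s)\vol(K)\}\subseteq \{\mathbf t : \mathbf t\in K-K\}$ and, more usefully, by Brunn--Minkowski / a volume-ratio argument $g(\mathbf t)\le(1-\|\mathbf t\|_K\text{-type quantity})^n\vol(K)$ so the super-level sets shrink geometrically; this makes $\Exp[\exp(\rho g(\mathbf U))]\le 1+o(1)$ after integrating, since $\rho=\Theta(n\ln n)$ while the measure of $\{g\ge(1-s)\vol(K)\}$ decays like $s^{n}\cdot(\text{const})^n/\vol(\mathbb T)$. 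I expect the main obstacle to be exactly this estimate: controlling $\Exp[\exp(\rho\, g(\mathbf U))]$ uniformly over all convex bodies $K$ and all packing tori $\mathbb T$, i.e.\ getting a clean tail bound on the autocorrelation super-level sets $\vol\{\mathbf t\in\mathbb T: g(\mathbf t)\ge(1-s)\vol(K)\}$ that is strong enough near $s=0$ to survive multiplication by $e^{\rho\vol(K)}\approx n^{n/2}$. Everything else — the Poisson superposition formula for the joint non-covering probability, and the Paley--Zygmund deduction $\PP[W=0]\le\mathrm{Var}(W)/\mathbb E[W^2]=1-(\mathbb E W)^2/\mathbb E[W^2]\to 0$ — is routine.
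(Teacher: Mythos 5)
Your overall strategy --- a second-moment/Chebyshev argument on an ``uncovered'' statistic, reducing everything to the moment generating function $\mathbb{E}\bigl[\exp\bigl(\rho\,\vol(K\cap(K+\mathbf U))\bigr)\bigr]$ of the autocorrelation of $K$ --- is essentially the paper's strategy as well (the paper runs the second moment over the points of a well-separated net $P\subseteq\mathbb{T}$ rather than over the uncovered volume $W$, which conveniently discretizes the near-diagonal contribution, but that is a presentational difference). The problem is that the step you yourself flag as ``the main obstacle'' is exactly where all the content of the theorem lives, and the substitute you sketch for it does not work. You need, uniformly over \emph{all} convex bodies $K$ with $\vol(K)=1$, that $\vol\{\mathbf t:\vol(K\cap(K+\mathbf t))\ge(1-s)\}$ is small enough to survive multiplication by $e^{\rho\vol(K)}=n^{n/2}(\ln n)^{-(1+\delta)n}$. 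Your proposed bound of order $s^n\cdot(\mathrm{const})^n$ is quantitatively insufficient even if it were proved: integrating $(\mathrm{const}\cdot s)^n\rho e^{\rho(1-s)}\,\dd s/\vol(\mathbb T)$ with $\vol(\mathbb T)$ as small as $\vol(K-K)\le 4^n$ leaves a factor of order $(c\sqrt n/(\ln n)^{2+\delta})^n\to\infty$; what is needed is an \emph{additional} factor of order $n^{-n/2}$ in the measure of the significant-overlap set. Moreover, the pointwise inequality you invoke, $g(\mathbf t)\le(1-\|\mathbf t\|_{K\text{-type}})^n\vol(K)$, is false: for the cube $C_n$ and $\mathbf t=(1/2,0,\dots,0)$ one has $g(\mathbf t)=1/2$ while the gauge of $\mathbf t$ in $C_n-C_n$ is $1/2$, so the right-hand side would be $2^{-n}$.

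The paper obtains the missing estimate by putting $K$ in isotropic position, invoking the recent resolution of the slicing problem (Guan, Klartag--Lehec: $L_K\le C$ for an absolute constant $C$), and applying Borell's lemma to get $\vol\bigl(K\cap(K+\mathbf x)\bigr)\le 3^{-\|\mathbf x\|/(8L_K)}$ for $\|\mathbf x\|\ge 4L_K$. This shows that the set where the overlap is non-negligible is contained in a Euclidean ball of \emph{constant} radius $O(L_K)=O(1)$, whose volume is $O(1)^n\nu_n=n^{-n/2+O(n/\ln n)}$ --- precisely the extra $n^{-n/2}$ needed to beat $e^{\rho}\approx n^{n/2}$, and the reason the threshold for general $K$ sits at $\tfrac12 n\ln n$ rather than at $n\ln n$. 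Without this (or an equivalent uniform control of the autocorrelation, which is a deep fact rather than a routine Brunn--Minkowski computation), your argument does not close; note also that the naive first moment $\mathbb E[W]=\vol(\mathbb T)e^{-\rho}$ can tend to $0$ when $\vol(\mathbb T)\approx\vol(K-K)$, so the sharp form of the second-moment ratio, not just a crude splitting, is genuinely required.
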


Note that,  by 
Theorem~\ref{THM:poisson-process-upper-bound}, the constant $1/2$ in front of $n\ln n$ in Theorem~\ref{THM:poisson-process-lower-bound} is best possible when $K$ is a unit ball and $\vol(\mathbb{T}) \le M^n \nu_{n}$, where $M$ is some constant.

\begin{remark}\label{re:N}
If we condition a Poisson point process on resulting in exactly $N$ points then we get $N$ independent uniform points in the domain.  Thus, by monotonicity and concentration 
(cf Claim~\ref{CLAIM:X-concentration}),
the conclusion of Theorem~\ref{THM:poisson-process-lower-bound} also holds if $\mathbf{X}$ is obtained by choosing $N$ independent points, provided $N$ is strictly less than $(1/2+o(1))\,(n\ln n)\cdot \vol(\mathbb{T})/\vol(K)$. 
\end{remark}

\hide{
Also, note that Theorem~\ref{THM:poisson-process-lower-bound} is non-vacuous for any convex body $K\subseteq \mathbb{R}^n$: by a result of e.g.\ Rogers and Shephand~\cite{57RS} there exists a $K$-packing lattice $\Lambda\subseteq \mathbb{R}^n$ satisfying $\vol\big(\mathbb{T}_{\Lambda}\big) \le 4^n\cdot \vol(K)$. In the special case when $K$ is a ball $B^n_r$, a recent break-through result of Klartag~\cite{klar2025} shows that one can find a packing torus  $\mathbb{T}_{\Lambda}$ with $\vol\big(\mathbb{T}_{\Lambda}\big) \le cn^{-2}\cdot 2^n \vol(B_r^n)$.
}
Recall that the covering result of Rogers~\cite{Rog57A} holds for all convex bodies.
A natural question is whether the Euclidean ball in Theorem~\ref{THM:poisson-process-upper-bound}\ref{THM:poisson-process-upper-bound-a} can be replaced by an arbitrary convex body.
The following result shows that this is not the case. 

\begin{theorem}\label{THM:poisson-process-lower-bound-cube}
    Let $\delta > 0$ be a real number. 
    Let $C_{n}\coloneqq[-1/2,1/2]^n$ be the unit cube in $\mathbb{R}^{n}$ (of volume $1$) and $\mathbb{T}$ be a packing torus of $C_n-C_n$ (which is equal to $2 C_{n}$). 
    Suppose that $\mathbf{X} \subseteq \mathbb{T}$ is sampled from the Poisson point process with intensity 
    \begin{align*}
        \rho \coloneqq n \ln n - (1+ \delta) n \ln\ln n . 
    \end{align*}
    Then the probability that $\mathbb{T} \subseteq \mathbf{X} + K$ tends to $0$ as $n\to \infty$.
\end{theorem}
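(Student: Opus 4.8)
The goal is to prove $\Pr[\mathbb{T}\subseteq\mathbf{X}+C_n]\to0$. Since a finite–intensity Poisson process a.s.\ places finitely many points on $\mathbb{T}$, the set $\mathbf{X}+C_n$ is closed, so the uncovered region $U\coloneqq\mathbb{T}\setminus(\mathbf{X}+C_n)$ is open; hence $\mathbb{T}\subseteq\mathbf{X}+C_n$ exactly when $U=\emptyset$, and it suffices to show $\Pr[U\ne\emptyset]\to1$. Because $\mathbb{T}$ is a packing torus of $2C_n=C_n-C_n$ we have $\vol(\mathbb{T})\ge 2^n$ and every translate $\mathbf{v}+C_n$ embeds in $\mathbb{T}$ with volume $1$; moreover $\mathbf{v}\in C_n+\mathbf{p}\iff\mathbf{p}\in\mathbf{v}+C_n$, so $U$ is exactly the complement of the union of the unit cubes $\mathbf{p}+C_n$, $\mathbf{p}\in\mathbf{X}$. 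The plan follows the scheme behind Theorem~\ref{THM:poisson-process-lower-bound}, but the cube's flat faces permit a sharper ``lacuna'' structure, which is precisely what upgrades the leading constant from $1/2$ to $1$.

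Call $\mathbf{v}\in\mathbb{T}$ a \emph{corner} if there are $\mathbf{p}^{(1)},\dots,\mathbf{p}^{(n)}\in\mathbf{X}$ and signs $s_1,\dots,s_n\in\{\pm1\}$ with $p^{(i)}_i=v_i-s_i/2$ and $|v_j-p^{(i)}_j|<1/2$ for all $i$ and all $j\ne i$, and additionally $\mathbf{X}\cap\operatorname{int}(\mathbf{v}+C_n)=\emptyset$ (the $\mathbf{p}^{(i)}$ are automatically distinct). Near such a $\mathbf{v}$ each $C_n+\mathbf{p}^{(i)}$ coincides with the halfspace $\{s_ix_i\le s_iv_i\}$, so $U$ contains the nonempty open orthant $\{s_ix_i>s_iv_i:\,i\in[n]\}$; thus the existence of a corner witnesses $U\ne\emptyset$. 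Let $Z$ count corners (generically the witnessing tuple and signs are determined by $\mathbf{v}$, and the sign in each coordinate is forced, so there is no overcounting). The multivariate Mecke equation for the Poisson process gives
\[
\mathbb{E}[Z]=2^n\,\vol(\mathbb{T})\,\rho^{n}e^{-\rho}\ \ge\ 4^n\rho^{n}e^{-\rho},
\]
where the $2^n$ counts sign vectors, $\vol(\mathbb{T})$ the range of $\mathbf{v}$, $\rho^{n}$ the intensity factor for the $n$ witnesses (the $i$-th coordinate of $\mathbf{p}^{(i)}$ is pinned to $v_i-s_i/2$, with Jacobian $1$ against $v_i$, while its other $n-1$ coordinates run over a unit box), and $e^{-\rho}=e^{-\rho\vol(C_n)}$ is the probability that the remaining points of $\mathbf{X}$ avoid $\operatorname{int}(\mathbf{v}+C_n)$ (the $\mathbf{p}^{(i)}$ meet this set only on its boundary). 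For $\rho=n\ln n-(1+\delta)n\ln\ln n$ this yields $\ln\mathbb{E}[Z]\ge(2+\delta+o(1))\,n\ln\ln n\to\infty$, the decisive term being $\rho^{n}\asymp(n\ln n)^{n}$: the flat faces let a single lacuna be pinned by $n$ \emph{separate} Poisson points, each worth a factor $\asymp\rho$, whereas a general convex body supplies far fewer — the source of the factor-$2$ difference.

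To upgrade this to $\Pr[Z=0]\to0$ we estimate $\mathbb{E}[Z^2]$. Consider ordered pairs of corners $(\mathbf{v},\mathbf{v}')$ with disjoint witness sets; the avoidance event is then $\mathbf{X}\cap\operatorname{int}\bigl((\mathbf{v}+C_n)\cup(\mathbf{v}'+C_n)\bigr)=\emptyset$, of probability $e^{-\rho(2-\vol((\mathbf{v}+C_n)\cap(\mathbf{v}'+C_n)))}$. Since $2C_n$ packs, as $\mathbf{v}'$ ranges over $\mathbb{T}$ its displacement from $\mathbf{v}$, whenever the two cubes meet, is a uniform point $\Delta\in[-1,1]^n$ with $\vol((\mathbf{v}+C_n)\cap(\mathbf{v}'+C_n))=\prod_i(1-|\Delta_i|)$. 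The key computation is that, in the present regime,
\[
\int_{[-1,1]^n}e^{\rho\prod_i(1-|\Delta_i|)}\,d\Delta\;=\;2^n\sum_{k\ge0}\frac{\rho^k}{k!\,(k+1)^n}\;=\;2^n\bigl(1+o(1)\bigr),
\]
the $k=0$ term dominating because $e^{\rho}/\rho^{n}\to0$ when $\rho=n\ln n-(1+\delta)n\ln\ln n$. Hence integrating out $\mathbf{v}'$ inflates the naive ``independent'' estimate $e^{-2\rho}\vol(\mathbb{T})$ only by $1+o(1)$, so disjoint-witness pairs contribute $(1+o(1))\mathbb{E}[Z]^2$; pairs sharing $k\ge1$ witnesses lose a factor $\rho^{-k}$ from having fewer free points, which outweighs the at most $n^{O(n)}$ ways to match witnesses, so they contribute $o(\mathbb{E}[Z]^2)$. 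Thus $\mathbb{E}[Z^2]=(1+o(1))\mathbb{E}[Z]^2$, so $\operatorname{Var}(Z)=o(\mathbb{E}[Z]^2)$ and, by Chebyshev's inequality, $\Pr[Z=0]\to0$; therefore $\Pr[\mathbb{T}\subseteq\mathbf{X}+C_n]\le\Pr[Z=0]\to0$.

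The technical heart — and the main obstacle — is the displayed identity $\int_{[-1,1]^n}e^{\rho\prod_i(1-|\Delta_i|)}\,d\Delta=2^n(1+o(1))$. It says that although two corners within $\ell_\infty$-distance $1$ are strongly positively correlated (the overlap of their exclusion cubes can inflate the joint avoidance probability by as much as $e^{\rho}=n^{(1+o(1))n}$), such configurations are rare enough to wash out; this is exactly where the critical scale $n\ln n$ enters, and where the hypothesis $\vol(\mathbb{T})\ge\vol(2C_n)=2^n$ is used (here, and to keep all unit-scale geometry on $\mathbb{T}$ faithful to $\mathbb{R}^n$). The remaining steps — the two volume/Jacobian computations for $\mathbb{E}[Z]$, the elementary asymptotics of $\sum_k\rho^k/(k!(k+1)^n)$, the shared-witness bound, and verifying that the generic-position caveats (unique witnesses, orthant structure near a corner) discard only null events — are routine.
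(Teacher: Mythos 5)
Your overall strategy is genuinely different from the paper's, and more ambitious: you run a second-moment argument on the number of ``corner'' configurations of the Poisson process itself (an $n$-tuple of points pinning a lacuna), whereas the paper runs a second-moment argument on the number of uncovered points of a \emph{fixed} well-separated net $P$ (a maximal packing of $\mathbb{T}$ by cross-polytopes $\{\|\mathbf{x}\|_1\le 2\ln n\}$, so that $\vol(C_n\cap(C_n+\mathbf{p}-\mathbf{p}'))\le(1-2\ln n/n)^n\le n^{-2}$ by AM--GM and all correlations are negligible by fiat). Your first-moment computation and the key integral identity $\int_{[-1,1]^n}e^{\rho\prod_i(1-|\Delta_i|)}\,d\Delta=2^n(1+o(1))$ are correct (the maximal term of $\sum_k\rho^k/(k!(k+1)^n)$ with $k\ge1$ sits near $k=\rho$ and has logarithm $\rho-n\ln\rho=-(2+\delta+o(1))n\ln\ln n\to-\infty$), and the disjoint-witness part of $\mathbb{E}[Z^2]$ does come out as $(1+o(1))\mathbb{E}[Z]^2$.

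However, there is a genuine gap in your treatment of pairs of corners sharing $k\ge1$ witnesses. Your justification --- ``they lose a factor $\rho^{-k}$ from having fewer free points, which outweighs the at most $n^{O(n)}$ ways to match witnesses'' --- is quantitatively wrong on its face: for $k=1$ the factor $\rho^{-1}=(n\ln n)^{-1}$ does not outweigh even a single factor of $n^2$ from choosing the two coordinate directions the shared witness serves, let alone $n^{O(n)}$. More importantly, this accounting silently drops the correlation factor $e^{\rho\vol((\mathbf{v}+C_n)\cap(\mathbf{v}'+C_n))}$ for these pairs, which is exactly the dangerous term: two corners sharing a witness are within $\ell_\infty$-distance $1$ of each other, so this factor can be as large as $e^{\rho}=n^{(1+o(1))n}$ and cannot be ignored. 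A correct bound must redo, for each sharing pattern, the overlap integral over the \emph{constrained} region of $\Delta=\mathbf{v}-\mathbf{v}'$ (with $k$ coordinates of $\Delta$ pinned or restricted by the shared witnesses) and balance it against the factors $\rho^{-k}$ and the combinatorial count; I believe this can be made to work, but it is a substantial computation, not the routine step you claim, and as written the argument does not establish $\Sigma_{\text{shared}}=o(\mathbb{E}[Z]^2)$. (A smaller, genuinely routine point you correctly flag but should still carry out: ``a corner exists'' implies ``$U\ne\emptyset$'' only after excluding, via a Mecke/null-set argument, non-witness points of $\mathbf{X}$ lying on $\partial(\mathbf{v}+C_n)$ on the far faces, which would cover the orthant near $\mathbf{v}$.) If your aim is just the stated theorem, the paper's route is far shorter: the cross-polytope packing makes all pairwise correlations $\exp(\rho/n^2)-1=o(1)$ at once, and the only input is the volume count $|P|\ge\vol(\mathbb{T})\,(n/(8\mathrm{e}\ln n))^n$.
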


Similarly as in Remark~\ref{re:N}, the conclusion of Theorem~\ref{THM:poisson-process-lower-bound-cube} also holds if we sample fewer than $(1+o(1))n\ln n\cdot \vol(\mathbb{T})$ uniform points in $\mathbb{T}$. Also, the proof of Rogers~\cite{Rog57A} can be adapted to show the constant 1 in front of $n\ln n$ in Theorem~\ref{THM:poisson-process-lower-bound-cube} is best possible. Thus, rather surprisingly, an $n$-dimensional cube is a ``worst'' convex body for covering by random translations.

In the next section, we present the proof of Theorem~\ref{THM:poisson-process-upper-bound}.
In Section~\ref{SEC:lower-bound}, we give the proofs of Theorems~\ref{THM:poisson-process-lower-bound} and~\ref{THM:poisson-process-lower-bound-cube}.

\section{Proof of Theorem~\ref{THM:poisson-process-upper-bound}}\label{SEC:proof-upper-bound}
In this section, we present the proof of Theorem~\ref{THM:poisson-process-upper-bound}. 

We will use the following standard estimate for the tail of the Poisson distribution, which can be found, for example, in~{\cite[Theorem~5.4]{ME05}}.
\begin{lemma}\label{LEMMA:Poisson-concentration}
    Suppose that $\mathbf{X} \sim \mathrm{Pois}(\lambda)$. 
    Then the following statements hold. 
    \begin{enumerate}[label=(\roman*)]
        \item\label{LEMMA:Poisson-concentration-a} For every $\sigma > 0$, 
        \begin{align*}
            \mathbb{P}\big[\, \mathbf{X} \ge (1+\sigma) \lambda\, \big] 
            \le \me^{-\lambda} \left(\frac{\me \lambda}{(1+\sigma) \lambda}\right)^{(1+\sigma) \lambda}
            = \left(\frac{\me^{\sigma}}{(1+\sigma)^{1+\sigma}}\right)^{\lambda}. 
        \end{align*}
        \item\label{LEMMA:Poisson-concentration-b} For every $\sigma \in (0,1)$, 
        \begin{align*}
            \mathbb{P}\big[\, \mathbf{X} \le (1-\sigma) \lambda\, \big] 
            \le \left(\frac{\me^{-\sigma}}{(1-\sigma)^{1-\sigma}}\right)^{\lambda}. 
        \end{align*}
    \end{enumerate}
\end{lemma}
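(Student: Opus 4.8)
The plan is to apply the exponential Markov inequality (the Chernoff method) together with the explicit moment generating function of the Poisson distribution. Recall that if $\mathbf{X}\sim\mathrm{Pois}(\lambda)$ then, summing the defining series,
\begin{align*}
\mathbb{E}\big[\me^{t\mathbf{X}}\big]=\sum_{k\ge 0}\me^{tk}\,\frac{\lambda^{k}\me^{-\lambda}}{k!}=\me^{-\lambda}\sum_{k\ge 0}\frac{(\lambda\me^{t})^{k}}{k!}=\exp\!\big(\lambda(\me^{t}-1)\big)
\end{align*}
for every real $t$; the series converges for all $t$, so this identity is unconditional.

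For part~\ref{LEMMA:Poisson-concentration-a}, fix any $t>0$. Since $\me^{t\mathbf{X}}$ is a nonnegative random variable and $x\mapsto\me^{tx}$ is increasing, Markov's inequality gives, with $a=(1+\sigma)\lambda$,
\begin{align*}
\mathbb{P}\big[\mathbf{X}\ge a\big]=\mathbb{P}\big[\me^{t\mathbf{X}}\ge\me^{ta}\big]\le\me^{-ta}\,\mathbb{E}\big[\me^{t\mathbf{X}}\big]=\exp\!\big(-ta+\lambda(\me^{t}-1)\big).
\end{align*}
I would then choose $t$ to minimise the exponent: differentiating $-ta+\lambda(\me^{t}-1)$ in $t$ and setting the derivative to zero yields $\me^{t}=a/\lambda=1+\sigma$, i.e.\ $t=\ln(1+\sigma)$, which is indeed positive for $\sigma>0$, so the Markov step above is legitimate. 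Substituting this value back gives the exponent $-(1+\sigma)\lambda\ln(1+\sigma)+\sigma\lambda$, so the bound becomes $(1+\sigma)^{-(1+\sigma)\lambda}\me^{\sigma\lambda}=\big(\me^{\sigma}/(1+\sigma)^{1+\sigma}\big)^{\lambda}$; this is exactly the claimed quantity, and the middle expression in the statement is merely the same number rewritten as $\me^{-\lambda}\big(\me\lambda/((1+\sigma)\lambda)\big)^{(1+\sigma)\lambda}$.

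Part~\ref{LEMMA:Poisson-concentration-b} is entirely symmetric, the only change being the sign of $t$. Now I would take $t<0$, so that $x\mapsto\me^{tx}$ is decreasing and Markov's inequality yields, with $a=(1-\sigma)\lambda$,
\begin{align*}
\mathbb{P}\big[\mathbf{X}\le a\big]=\mathbb{P}\big[\me^{t\mathbf{X}}\ge\me^{ta}\big]\le\me^{-ta}\,\mathbb{E}\big[\me^{t\mathbf{X}}\big]=\exp\!\big(-ta+\lambda(\me^{t}-1)\big).
\end{align*}
The same one-line optimisation gives the stationary point $\me^{t}=a/\lambda=1-\sigma$, i.e.\ $t=\ln(1-\sigma)$, which is negative precisely because $0<\sigma<1$; plugging in produces $(1-\sigma)^{-(1-\sigma)\lambda}\me^{-\sigma\lambda}=\big(\me^{-\sigma}/(1-\sigma)^{1-\sigma}\big)^{\lambda}$, as required.

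There is no genuine obstacle here: the argument is routine, and the only points that merit a sentence of care are that the optimising $t$ has the correct sign in each case (so the direction of the Markov inequality is valid) and that the moment generating function series converges for all real $t$ (which it does). If one prefers to avoid the moment generating function, part~\ref{LEMMA:Poisson-concentration-a} can alternatively be obtained by bounding the tail sum $\sum_{k\ge a}\lambda^{k}\me^{-\lambda}/k!$ directly by a geometric series with ratio $\lambda/a=1/(1+\sigma)<1$ and estimating $a!\ge (a/\me)^{a}$, but the Chernoff computation above is cleaner and treats both tails uniformly; in any case the statement is classical and may simply be quoted from~\cite[Theorem~5.4]{ME05}.
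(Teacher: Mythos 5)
Your proof is correct: the Chernoff/moment-generating-function computation, with $t=\ln(1+\sigma)>0$ for the upper tail and $t=\ln(1-\sigma)<0$ for the lower tail, yields exactly the stated bounds, and your sign checks justify both applications of Markov's inequality. The paper does not prove this lemma but simply cites it as {\cite[Theorem~5.4]{ME05}}, and the standard proof given there is precisely this argument, so your approach matches the intended one.
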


We will also use the following simple geometric result.
\begin{lemma}\label{LEMMA:volume-intersection-sphere}
    Let $\delta > 0$ be a real number and let $n\to\infty$. 
    Suppose that $\mathbf{x}_{1}, \mathbf{x}_{2} \in \mathbb{R}^{n}$ are two points satisfying $\|\mathbf{x}_{1}-\mathbf{x}_{2}\| \le r n^{-{1}/{2} - \delta}$. Then
    \begin{align*}
        \vol\big( B_{r}^{n}(\mathbf{x}_{1}) \setminus B_{r}^{n}(\mathbf{x}_{2}) \big)
        \le \frac{1+o(1)}{\sqrt{2 \pi}}\, n^{-\delta} \cdot \vol\big(B_{r}^{n}\big). 
    \end{align*}
\end{lemma}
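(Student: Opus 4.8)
The plan is to reduce to the unit-radius case by the dilation $\mathbf y \mapsto \mathbf y/r$ (which scales all volumes by $r^{-n}$ and the distance $\|\mathbf x_1-\mathbf x_2\|$ by $r^{-1}$, so the asserted inequality is dilation-invariant), and then, after a rigid motion, to take $\mathbf x_1 = \mathbf 0$ and $\mathbf x_2 = d\,\mathbf e_1$ with $d \coloneqq \|\mathbf x_1 - \mathbf x_2\| \le n^{-1/2-\delta}$, the case $d=0$ being trivial. Writing a point of $\mathbb R^n$ as $(s,\mathbf y')\in\mathbb R\times\mathbb R^{n-1}$, such a point lies in $B_1^n(\mathbf 0)\setminus B_1^n(\mathbf x_2)$ precisely when $\|\mathbf y'\|^2 \le 1-s^2$ and $\|\mathbf y'\|^2 > 1-(s-d)^2$; since $(s-d)^2 \ge s^2$ exactly for $s\le d/2$, this slice is empty outside $s\in(-1,d/2)$, and there its $(n-1)$-dimensional volume is $\nu_{n-1}\big[(1-s^2)^{(n-1)/2} - (\max\{0,\,1-(s-d)^2\})^{(n-1)/2}\big]$. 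So the quantity to estimate is $\nu_{n-1}\int_{-1}^{d/2}\big[(1-s^2)^{(n-1)/2} - (\max\{0,\,1-(s-d)^2\})^{(n-1)/2}\big]\,\dd s$.

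I would then linearise the integrand: factoring out $(1-s^2)^{(n-1)/2}$ and using Bernoulli's inequality $(1-u)^m \ge 1-mu$ for $u\in[0,1]$ (the boundary range where $1-(s-d)^2<0$ forces $s<d-1$, on which $(1-s^2)^{(n-1)/2}\le(2d)^{(n-1)/2}$ is super-exponentially small and contributes negligibly), each slice is at most $\nu_{n-1}\,\tfrac{n-1}{2}\,d\,(d-2s)\,(1-s^2)^{(n-3)/2}$. Splitting the resulting integral, the term $\int_{-1}^{d/2}(-2s)(1-s^2)^{(n-3)/2}\,\dd s = \tfrac{2}{n-1}(1-d^2/4)^{(n-1)/2} \le \tfrac{2}{n-1}$ provides the main contribution, while the term $d\int_{-1}^{d/2}(1-s^2)^{(n-3)/2}\,\dd s \le d\int_{\mathbb R}\me^{-(n-3)s^2/2}\,\dd s = d\sqrt{2\pi/(n-3)}$ (using $1-x\le \me^{-x}$) is smaller by a factor $O(\sqrt n\, d) = O(n^{-\delta}) = o(1)$. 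Hence $\vol\big(B_1^n(\mathbf 0)\setminus B_1^n(\mathbf x_2)\big) \le (1+o(1))\,\nu_{n-1}\,d$. Finally, slicing the $n$-ball gives $\nu_n = \nu_{n-1}\int_{-1}^{1}(1-s^2)^{(n-1)/2}\,\dd s = \nu_{n-1}\sqrt\pi\,\Gamma(\tfrac{n+1}{2})/\Gamma(\tfrac n2 + 1)$, so Stirling's formula yields $\nu_{n-1}/\nu_n = (1+o(1))\sqrt{n/(2\pi)}$; combined with $d \le n^{-1/2-\delta}$ this gives $(1+o(1))\tfrac{1}{\sqrt{2\pi}}\,n^{-\delta}\,\nu_n$, and undoing the dilation recovers the statement for general $r$.

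The main obstacle is obtaining the sharp constant $1/\sqrt{2\pi}$ rather than just a crude $O(n^{-\delta})$ bound. The naive containment $B_1^n(\mathbf x_1)\setminus B_1^n(\mathbf x_2) \subseteq B_1^n \setminus B_{1-d}^n$ only yields $\vol \le (1-(1-d)^n)\nu_n \le nd\,\nu_n = O(n^{1/2-\delta})\,\nu_n$, which is off by a factor $\sqrt n$: it overcounts because the difference set fills only a vanishing fraction of that thin shell (morally the ``lower half'', near $y_1\lesssim 0$, where the mass of a high-dimensional ball does not concentrate). The slicing computation is what extracts the correct order, and the only delicate point is checking that the subleading contributions — the Gaussian-type term and the factor $(1-d^2/4)^{(n-1)/2}$ — are genuinely $1+o(1)$ relative to the main term, which is precisely where the hypothesis $d\le n^{-1/2-\delta}$ (and not merely $d\to 0$) enters.
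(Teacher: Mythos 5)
Your argument is correct, and all the delicate points you flag (the Bernoulli linearisation being valid only where $1-(s-d)^2\ge 0$, the super-exponentially small boundary range $s\in[-1,d-1)$, and the need for $\sqrt{n}\,d=O(n^{-\delta})$ to kill the Gaussian-type term) check out. However, you take a genuinely different and more computational route than the paper. The paper's proof is essentially two lines: writing $\vol\big(B_{r}^{n}(\mathbf{x}_{1}) \setminus B_{r}^{n}(\mathbf{x}_{2})\big)=\vol\big(B_{r}^{n}(\mathbf{x}_{1}) \cup B_{r}^{n}(\mathbf{x}_{2})\big)-\vol\big(B_{r}^{n}(\mathbf{x}_{2})\big)$, it bounds the union by its convex hull, which is a spherocylinder of exact volume $\vol\big(B_r^n\big)+\|\mathbf{x}_1-\mathbf{x}_2\|\cdot\vol\big(B_r^{n-1}\big)$; this gives the clean intermediate bound $\vol\big(B_{r}^{n}(\mathbf{x}_{1}) \setminus B_{r}^{n}(\mathbf{x}_{2})\big)\le \|\mathbf{x}_1-\mathbf{x}_2\|\cdot\vol\big(B_r^{n-1}\big)$ with no asymptotics at all, after which only the Gamma-ratio $\vol\big(B_r^{n-1}\big)/\vol\big(B_r^n\big)=\frac{1+o(1)}{r\sqrt{2\pi}}\sqrt{n}$ contributes the $1+o(1)$ — exactly the ratio you compute at the end of your argument. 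Your slicing computation recovers the same intermediate bound $\le(1+o(1))\,d\,\nu_{n-1}$ but only asymptotically and at the cost of the error analysis; what it buys is that it is elementary calculus with no appeal to convexity, and it makes transparent, as you observe, why the crude shell bound $B_1^n\setminus B_{1-d}^n$ loses a factor of $\sqrt{n}$. Either proof is acceptable; the convex-hull observation is worth knowing as it shortens this kind of estimate considerably.
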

\begin{proof}[Proof of Lemma~\ref{LEMMA:volume-intersection-sphere}]
    Let $\mathbf{x}_{1}, \mathbf{x}_{2} \in \mathbb{R}^{n}$ be two points such that $\|\mathbf{x}_{1}-\mathbf{x}_{2}\| \le r n^{-{1}/{2} - \delta}$.
    Using the facts 
    \begin{align*}
        \vol(B_{r}^{n})
        = \frac{\pi^{n/2}}{\Gamma\left(\frac{n}{2} + 1\right)}\, r^n
        \quad\text{and}\quad 
        \frac{\Gamma\left(\frac n2+1\right)}{\Gamma\left(\frac {n-1}2+1\right)} 
        =(1+o(1)) \sqrt{\frac n2}, 
    \end{align*} 
    we obtain  
    \begin{equation}\label{eq:temp1}
        \frac{\vol\big(B_{r}^{n-1}\big)}{\vol\big(B_{r}^{n}\big)}
         = \frac{1}{r \sqrt{\pi}} \frac{\Gamma\left(\frac{n}{2}+1\right)}{\Gamma\left(\frac{n-1}{2}+1\right)}
        = \frac{1+o(1)}{r \sqrt{2 \pi}} \sqrt{n}. 
    \end{equation}
    Let $\mathrm{conv}(X)$ denote the convex hull of a set $X\subseteq \mathbb{R}^n$. It follows from~\eqref{eq:temp1} that 
    \begin{align*}
        \vol\big( B_{r}^{n}(\mathbf{x}_{1}) \setminus B_{r}^{n}(\mathbf{x}_{2}) \big)
        & = \vol\big( B_{r}^{n}(\mathbf{x}_{1}) \cup B_{r}^{n}(\mathbf{x}_{2}) \big) - \vol\big( B_{r}^{n}(\mathbf{x}_{2}) \big) \\[0.3em]
        & \le \vol\big( \mathrm{conv}\big(B_{r}^{n}(\mathbf{x}_{1}) \cup B_{r}^{n}(\mathbf{x}_{2}) \big) \big) - \vol\big( B_{r}^{n}(\mathbf{x}_{2}) \big) \\[0.3em]
        & = 
        \|\mathbf{x}_{1}-\mathbf{x}_{2}\|\cdot \vol\big(B_{r}^{n-1}\big) \\[0.3em]
        & \le \frac{r}{ n^{1/2 + \delta}}\, \frac{1+o(1)}{r \sqrt{2 \pi}}\, \sqrt{n} \cdot \vol\big(B_{r}^{n}\big)
        = \frac{1+o(1)}{\sqrt{2 \pi}}\, n^{-\delta} \cdot \vol\big(B_{r}^{n}\big),  
    \end{align*}
    as desired. 
\end{proof}

We are now ready to present the proof of Theorem~\ref{THM:poisson-process-upper-bound}. 
\hide{
Here we work inside a packing torus $\mathbb{T}=\mathbb{R}^n/\Lambda$ of $B_2^n=B_1^n-B_1^n$. 
Note that the projection $\mathbb{R}^n\to \mathbb{T}$ is injective on any ball $B_1^n(\mathbf{p})$ in $\mathbb{R}^n$. Indeed, if we had $\mathbf{k}+\mathbf{p}+\mathbf{x}=\mathbf{k'}+\mathbf{p}$ for some  $\mathbf{x}\in\Lambda$ and distinct $\mathbf{k},\mathbf{k'}\in B_1^n$ then 
$B_1^n\cap (B_1^n+\mathbf{x})$ would contain $\mathbf{k'}=\mathbf{k}+\mathbf{x}$, a contradiction
to our assumption that $\Lambda$ is a packing lattice for $B_1^n-B_1^n\supseteq B_1^n$.
In particular, the measure of a subset of any radius-1 ball, regardless whether we compute it inside $\mathbb{R}^n$ or inside~$\mathbb{T}$, justifying our usage of the same notation in both cases. Also, the distance between $\mathbf x,\mathbf y\in\mathbb{T}$ is defined as the distance between their preimage sets in $\mathbb{R}^n$ (and is also denoted by $\|\mathbf x-\mathbf y\|$). Note that the projection is a local isometry on each radius-1 ball in $\mathbb{R}^n$. We will use these facts a number of times in the proof of Theorem~\ref{THM:poisson-process-upper-bound} (as well as Theorems~\ref{THM:poisson-process-lower-bound} and~\ref{THM:poisson-process-lower-bound-cube}) without explicitly mentioning them.
}%
In this result as well as in 
Theorems~\ref{THM:poisson-process-lower-bound} and~\ref{THM:poisson-process-lower-bound-cube}, we work inside a packing torus $\mathbb{T}=\mathbb{R}^n/\Lambda$ of $K-K$ (where $K=B_1^n$ in Theorem~\ref{THM:poisson-process-upper-bound}). 
Note that the projection $\mathbb{R}^n\to \mathbb{T}$ is injective on any translate $K+\mathbf{p}\subseteq \mathbb{R}^n$. Indeed, if we had $\mathbf{k}+\mathbf{p}+\mathbf{x}=\mathbf{k'}+\mathbf{p}$ for some  $\mathbf{x}\in\Lambda$ and distinct $\mathbf{k},\mathbf{k'}\in K$ then 
$K\cap (K+\mathbf{x})$ would contain $\mathbf{k'}=\mathbf{k}+\mathbf{x}$, a contradiction
to our assumption that $\Lambda$ is a packing lattice for $K-K$ (which contains a copy of $K$).
In particular, the measure of a subset of any translate of $K$ is the same, regardless whether we compute it inside $\mathbb{R}^n$ or inside~$\mathbb{T}$, justifying our usage of the same notation $\vol(\cdot)$ in both cases. Also, the distance between $\mathbf x,\mathbf y\in\mathbb{T}$ is defined as the distance between their preimage sets in $\mathbb{R}^n$ (and still denoted by $\|\mathbf x-\mathbf y\|$). Note that the projection is also an isometry on each radius-1 ball in $\mathbb{R}^n$. We will use these facts a number of times in the forthcoming proofs without explicitly mentioning them.

\begin{proof}[Proof of Theorem~\ref{THM:poisson-process-upper-bound}]
    Fix $\delta > 0$, which we may assume to be small. 
    Let $\beta = \beta(\delta) \in (0, 1)$ be a real number satisfying  
    \begin{align}\label{equ:beta-def}
        \left(\frac{\me^{\beta-1}}{\beta^\beta}\right)^{1+\frac{\delta}{2}} 
        \le \me^{-1}.
    \end{align}
    Since $\lim_{x \to 0} \frac{\me^{x-1}}{x^x} = \me^{-1}$, such a real $\beta \in (0,1)$ exists. 
    
    Let $n$ be a sufficiently large integer.
    Define
    \begin{align*}
        \varepsilon \coloneqq \frac{1}{n \ln n}
        \quad\text{and}\quad 
        \mu \coloneqq n^{-\frac{1}{2} - \frac{\delta}{4}}. 
    \end{align*}

Let $\mathbb{T}$ be a packing torus of $B_{2}^{n}$ with volume at most $M^n \nu_{n}$. 
    Let $P_{\varepsilon} \subseteq \mathbb{T}$ and $P_{\mu} \subseteq \mathbb{T}$ be a maximal $B_{\varepsilon/2}^{n}$-packing and a maximal $B_{\mu/2}^{n}$-packing of $\mathbb{T}$, respectively. 
    By maximality, $P_{\varepsilon}$ forms a $B_{\varepsilon}$-covering of $\mathbb{T}$ and $P_{\mu}$ forms a $B_{\mu}$-covering of $\mathbb{T}$. 
    Also, we have the trivial upper bounds
    \begin{equation}\label{equ:max-packing-upper-bound}
        |P_{\varepsilon}|
        \le \frac{\vol(\mathbb{T})}{\vol\big( B_{\varepsilon/2}^n \big)}
        = \frac{M^n \nu_{n}}{(\varepsilon/2)^n \nu_{n}}
        =M^n \left(\frac{\varepsilon}{2}\right)^{-n}
        \quad\text{and}\quad 
        |P_{\mu}|
        \le M^n \left(\frac{\mu}{2}\right)^{-n}.
    \end{equation}
    Fix a map $\phi \colon P_{\varepsilon} \to P_{\mu}$ such that, for every $\mathbf{x} \in P_{\varepsilon}$, 
    \begin{align*}
        \|\mathbf{x}-\phi(\mathbf{x})\| \le \mu. 
    \end{align*}
    Note that such a map exists since $P_{\mu}$ is a $B_{\mu}^{n}$-covering. 

    Let $\mathbf{X}$ be a set sampled from the Poisson point process on $\mathbb{T}$ with intensity 
    \begin{align}\label{equ:rho-def-a}
        \rho \coloneqq \left(\frac{1}{2} + \delta\right) \frac{n \ln n}{\nu_{n}}. 
    \end{align}

    \begin{claim}\label{CLAIM:X-concentration}
        The probability that $|\mathbf{X}| \le (1+\delta) \rho \cdot \vol(\mathbb{T})$ tends to $1$ as $n \to \infty$. 
    \end{claim}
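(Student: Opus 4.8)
The plan is to observe that $|\mathbf{X}|$ is exactly a Poisson random variable and to invoke the tail bound of Lemma~\ref{LEMMA:Poisson-concentration}\ref{LEMMA:Poisson-concentration-a}. Set $\lambda \coloneqq \rho \cdot \vol(\mathbb{T})$. By property (i) in the definition of the Poisson point process (applied to the bounded measurable set $S = \mathbb{T}$), the random variable $|\mathbf{X}| = |\mathbf{X} \cap \mathbb{T}|$ has the Poisson distribution with mean $\lambda$.

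The first point to record is that $\lambda \to \infty$. Since $\mathbb{T}$ is a packing torus of $B_2^n$, the projection $\mathbb{R}^n \to \mathbb{T}$ is injective on any translate of $B_2^n$ (this is exactly the injectivity statement established just before the proof, with $K = B_2^n$, or more simply: if $B_2^n$ and $B_2^n + \mathbf{x}$ overlapped in $\mathbb{R}^n$ for some nonzero $\mathbf{x} \in \Lambda$ this would contradict the packing assumption). Hence $\vol(\mathbb{T}) \ge \vol(B_2^n) = 2^n \nu_n \ge \nu_n$, and therefore, by~\eqref{equ:rho-def-a},
\[
    \lambda = \rho \cdot \vol(\mathbb{T}) \ge \left(\frac{1}{2} + \delta\right) n \ln n \xrightarrow{\ n \to \infty\ } \infty .
\]

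Now apply Lemma~\ref{LEMMA:Poisson-concentration}\ref{LEMMA:Poisson-concentration-a} with $\sigma = \delta$:
\[
    \mathbb{P}\big[\, |\mathbf{X}| > (1+\delta)\,\rho \cdot \vol(\mathbb{T}) \,\big]
    \le \mathbb{P}\big[\, |\mathbf{X}| \ge (1+\delta)\lambda \,\big]
    \le \left(\frac{\me^{\delta}}{(1+\delta)^{1+\delta}}\right)^{\lambda}.
\]
For every fixed $\delta > 0$ we have $\me^{\delta} < (1+\delta)^{1+\delta}$ (equivalently $\delta < (1+\delta)\ln(1+\delta)$, which follows since the function $t \mapsto t\ln t - (t-1)$ vanishes at $t=1$ and is strictly increasing for $t > 1$), so the base $c(\delta) \coloneqq \me^{\delta}/(1+\delta)^{1+\delta}$ is a constant in $(0,1)$. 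Since $\lambda \to \infty$, we get $c(\delta)^{\lambda} \to 0$, and hence the probability that $|\mathbf{X}| \le (1+\delta)\,\rho \cdot \vol(\mathbb{T})$ tends to $1$, as claimed.

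There is essentially no obstacle here: the claim is a one-line Chernoff-type estimate for the Poisson distribution, and the only thing requiring a word of justification is the divergence $\lambda \to \infty$, which comes from the elementary lower bound $\vol(\mathbb{T}) \ge \vol(B_2^n)$ valid for any packing torus of $B_2^n$.
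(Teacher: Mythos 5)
Your proof is correct and follows essentially the same route as the paper: both identify $|\mathbf{X}|$ as Poisson with mean $\rho\cdot\vol(\mathbb{T})$, lower-bound this mean via $\vol(\mathbb{T})\ge\vol(B_2^n)$, and apply Lemma~\ref{LEMMA:Poisson-concentration}\ref{LEMMA:Poisson-concentration-a} with $\sigma=\delta$. The only cosmetic difference is that the paper quantifies the decay with the crude bound $\me^{x}/(1+x)^{1+x}\le\me^{-x^2/10}$, whereas you simply note the base is a constant in $(0,1)$, which suffices.
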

    \begin{proof}[Proof of Claim~\ref{CLAIM:X-concentration}]
        Since $\mathbb{T}$ is packing torus of $B_{2}^{n}$, we trivially have $\vol(\mathbb{T}) \ge \vol(B_{2}^{n}) = 2^{n} \nu_{n}$. 
        The random variable $|\mathbf{X}|$ has the Poisson distribution with mean $\rho \cdot \vol(\mathbb{T}) \ge 2^{n-1} n \ln n$. 
        It follows from Lemma~\ref{LEMMA:Poisson-concentration}~\ref{LEMMA:Poisson-concentration-a} and the crude inequality $\frac{\me^{x}}{(1+x)^{1+x}} \le \me^{-x^2/10}$ for $x\in [0,1]$ that 
        \begin{align*}
            \mathbb{P}\left[\,|\mathbf{X}| \ge (1+\delta) \rho \cdot \vol(\mathbb{T})\,\right]
             \le \left(\frac{\me^{\delta}}{(1+\delta)^{1+\delta}}\right)^{\rho \cdot \vol(\mathbb{T})} 
             \le \mathrm{exp}\left(-\frac{\delta^2}{10} \cdot 2^{n-1} n \ln n\right),   
        \end{align*}
        which goes to $0$ as $n \to \infty$. 
    \end{proof}
    
    For every point $\mathbf{y} \in \mathbb{T}$ and $d \ge 0$, let 
    \begin{align*}
        N_{d}(\mathbf{y}) 
        \coloneqq B_{d}^{n}(\mathbf{y}) \cap \mathbf{X}
        = \left\{ \mathbf{x} \in \mathbf{X} \colon \|\mathbf{x}-\mathbf{y}\| \le d \right\}. 
    \end{align*}
    We say that a point $\mathbf{y}\in\mathbb{T}$ is \emph{saturated} if 
    \begin{align*}
        |N_{1-\varepsilon}(\mathbf{y})| \ge \frac{\beta}{2}\, n \ln n. 
    \end{align*}
    Define the events $E_{1}$, $E_{2}$ and $E_{3}$ as follows: 
    \begin{itemize}
        \item $E_{1}$: every point in $P_{\mu}$ is saturated.
        \item $E_{2}$: $|N_{1+\varepsilon}(\mathbf{z})| \le \left(\xi + 10\delta\right) n \ln n$ for every $\mathbf{z} \in P_{\varepsilon}$, where $\xi = 1.79556...$ is the unique real root of~\eqref{equ:xi-def}.  
        \item $E_{3}$: for each $\mathbf{z} \in P_{\varepsilon}$, either $|N_{1-\varepsilon}(\mathbf{z})|>0$, or $\phi(\mathbf{z})$ is not saturated.
    \end{itemize}

Next, we show that each of these events holds with high probability.    

\begin{claim}\label{CLAIM:E1-E2-E3}
    We have $\min\left\{\,\mathbb{P}[E_1],\,\mathbb{P}[E_2],\,\mathbb{P}[E_3]\,\right\} \to 1$ as $n\to \infty$. 
\end{claim}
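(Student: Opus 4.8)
The plan is to handle the three events separately, in each case using a union bound over the relevant finite net ($P_\mu$ or $P_\varepsilon$) together with the Poisson tail estimates of Lemma~\ref{LEMMA:Poisson-concentration}; the geometry enters only through the elementary volume computation of Lemma~\ref{LEMMA:volume-intersection-sphere} and the crude bounds~\eqref{equ:max-packing-upper-bound} on $|P_\varepsilon|$ and $|P_\mu|$. Throughout we use that for a fixed $\mathbf y$, the count $|N_d(\mathbf y)| = |\mathbf X \cap B_d^n(\mathbf y)|$ is Poisson with mean $\rho\cdot\vol(B_d^n) = \rho\, d^n\nu_n = (\tfrac12+\delta)\,d^n\, n\ln n$, since the relevant radii ($1\pm\varepsilon$, and $\le 1$) are well below $2$ so the projection to $\mathbb T$ is an isometry on these balls.

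\emph{Event $E_1$.} For a fixed $\mathbf y\in P_\mu$, the mean of $|N_{1-\varepsilon}(\mathbf y)|$ is $(\tfrac12+\delta)(1-\varepsilon)^n n\ln n$; since $\varepsilon = 1/(n\ln n)$ we have $(1-\varepsilon)^n = (1-o(1))$, so this mean is $(\tfrac12+\delta-o(1))\,n\ln n$, which exceeds $\tfrac{\beta}{2}n\ln n$ by a constant factor once $\beta<1$ (indeed $\tfrac12+\delta > \tfrac{\beta}{2}$). Hence $\mathbf y$ fails to be saturated only if a Poisson variable falls below a $(1-\sigma)$-fraction of its mean for some fixed $\sigma>0$; by Lemma~\ref{LEMMA:Poisson-concentration}\ref{LEMMA:Poisson-concentration-b} this probability is $\exp(-\Omega(n\ln n))$, in fact of the form $c^{n\ln n}$ for some constant $c<1$. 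Union-bounding over $|P_\mu|\le M^n(\mu/2)^{-n} = \exp(O(n\ln n))$ points, we still get $\mathbb P[E_1^c]\to 0$, provided $\beta$ was chosen so that the decay rate beats the growth rate of $|P_\mu|$; this is where the defining inequality~\eqref{equ:beta-def} for $\beta$ is used (with the $\delta/2$ slack absorbing the $\ln\mu^{-1} = (\tfrac12+\tfrac{\delta}{4})\ln n$ contribution from $|P_\mu|$).

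\emph{Event $E_2$.} For fixed $\mathbf z\in P_\varepsilon$, $|N_{1+\varepsilon}(\mathbf z)|$ is Poisson with mean $(\tfrac12+\delta)(1+\varepsilon)^n n\ln n = (\tfrac12+\delta+o(1))\,n\ln n$. We want this to exceed $(\xi+10\delta)n\ln n$ with probability $\exp(-\omega(n\ln n / (\text{something})))$ small enough to survive a union bound over $|P_\varepsilon|\le M^n(\varepsilon/2)^{-n} = \exp((1+o(1))n\ln n)$. Writing $\lambda = (\tfrac12+\delta+o(1))n\ln n$ and applying Lemma~\ref{LEMMA:Poisson-concentration}\ref{LEMMA:Poisson-concentration-a} with $1+\sigma = (\xi+10\delta)/(\tfrac12+\delta)$, the failure probability is at most $\big(\me^{\sigma}/(1+\sigma)^{1+\sigma}\big)^{\lambda}$; one checks that the exponent $\lambda\cdot\ln\big((1+\sigma)^{1+\sigma}/\me^\sigma\big)$ equals $\big(\xi\ln(2\xi/\me) - \tfrac12 + o(1)\big)n\ln n$ after simplification, and the defining equation~\eqref{equ:xi-def} for $\xi$ (namely $\ln(2\xi/\me) = 1/(2\xi)$, i.e.\ $\xi\ln(2\xi/\me) = \tfrac12$) makes the leading constant vanish — so the extra $10\delta$ slack is precisely what gives a genuinely negative exponent $-\Omega(\delta\, n\ln n)$ that beats the $\exp((1+o(1))n\ln n)$ from $|P_\varepsilon|$. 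Union-bounding gives $\mathbb P[E_2^c]\to 0$.

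\emph{Event $E_3$.} Fix $\mathbf z\in P_\varepsilon$. If $|N_{1-\varepsilon}(\mathbf z)| > 0$ there is nothing to check, so condition on $N_{1-\varepsilon}(\mathbf z) = \emptyset$ and bound the probability that $\phi(\mathbf z)$ is nonetheless saturated, i.e.\ that $|N_{1-\varepsilon}(\phi(\mathbf z))| \ge \tfrac{\beta}{2}n\ln n$. Since $\|\mathbf z - \phi(\mathbf z)\|\le\mu = n^{-1/2-\delta/4}$, Lemma~\ref{LEMMA:volume-intersection-sphere} (applied with radius $r = 1-\varepsilon$ and exponent $\delta/4$) gives $\vol\big(B_{1-\varepsilon}^n(\phi(\mathbf z))\setminus B_{1-\varepsilon}^n(\mathbf z)\big) \le \tfrac{1+o(1)}{\sqrt{2\pi}}\,n^{-\delta/4}\vol(B_{1-\varepsilon}^n)$. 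Every point of $N_{1-\varepsilon}(\phi(\mathbf z))$ must then lie in this thin shell (as none lies in $B_{1-\varepsilon}^n(\mathbf z)$), and by the defining independence property of the Poisson process the count inside the shell is Poisson with mean at most $\rho\cdot\tfrac{1+o(1)}{\sqrt{2\pi}}\,n^{-\delta/4}(1-\varepsilon)^n\nu_n = o(n\ln n)\cdot n^{-\delta/4} \cdot (\tfrac12+\delta) n\ln n$, which is $O(n^{1-\delta/4}\ln n) = o(n\ln n)$, in particular far below $\tfrac{\beta}{2}n\ln n$. By Lemma~\ref{LEMMA:Poisson-concentration}\ref{LEMMA:Poisson-concentration-a} (applied with a huge $\sigma$, or simply the bound $\mathbb P[\mathrm{Pois}(\lambda)\ge t]\le (\me\lambda/t)^t$ for $t\ge\lambda$), this probability is at most $\big(\me\cdot o(1)\big)^{\beta n\ln n/2} = \exp(-\Omega(n\ln n\cdot\ln(1/o(1)))) = \exp(-\omega(n\ln n))$, which easily survives the union bound over $|P_\varepsilon| = \exp(O(n\ln n))$. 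Hence $\mathbb P[E_3^c]\to 0$, completing the proof.

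I expect \emph{Event $E_2$} to be the main obstacle: it is the only place where the precise constant $\xi$ matters and where the union bound over the finer net $P_\varepsilon$ (of size $\exp((1+o(1))n\ln n)$, which is the largest of the three bounds) must be beaten by only a $\Theta(\delta)$ margin in the exponent. Getting the algebra of $\lambda\ln\big((1+\sigma)^{1+\sigma}/\me^\sigma\big)$ to collapse exactly to $\big(\xi\ln(2\xi/\me)-\tfrac12\big)n\ln n = 0$ via~\eqref{equ:xi-def}, and verifying that the $10\delta$ slack in the definition of $E_2$ indeed produces a strictly negative leading coefficient (rather than just $o(1)$), is the delicate quantitative heart of the argument.
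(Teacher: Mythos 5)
Your overall strategy --- a union bound over the nets $P_\mu$ and $P_\varepsilon$ combined with the Poisson tail bounds of Lemma~\ref{LEMMA:Poisson-concentration}, with Lemma~\ref{LEMMA:volume-intersection-sphere} entering only through the thin-shell estimate for $E_3$ --- is exactly the paper's proof, and your treatments of $E_1$ and $E_3$ are correct in all essentials (the paper applies Lemma~\ref{LEMMA:volume-intersection-sphere} with exponent $\delta/8$ rather than $\delta/4$ only because the hypothesis $\|\mathbf x_1-\mathbf x_2\|\le r n^{-1/2-\delta}$ with $r=1-\varepsilon$ needs a hair of room; this is immaterial).

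The one genuine problem is the arithmetic in your $E_2$ analysis. With $1+\sigma=2\xi+O(\delta)$ and $\lambda=(\tfrac12+\delta+o(1))n\ln n$, the tail exponent is
\begin{align*}
\lambda\left((1+\sigma)\ln(1+\sigma)-\sigma\right)
=\frac{n\ln n}{2}\left(2\xi\ln(2\xi)-2\xi+1\right)+O(\delta)\,n\ln n
=\left(\xi\ln\frac{2\xi}{\me}+\frac12\right)n\ln n+O(\delta)\,n\ln n,
\end{align*}
with a \emph{plus} $\tfrac12$, not the minus $\tfrac12$ you wrote. The defining equation~\eqref{equ:xi-def} gives $\xi\ln(2\xi/\me)=\tfrac12$, so the leading constant is $1$, not $0$: the failure probability for a single $\mathbf z$ is $n^{-(1+\Omega(\delta))n}$, which exactly matches and (thanks to the $\delta$ slack) beats the union-bound cost $\ln|P_\varepsilon|=(1+o(1))n\ln n$. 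As you literally wrote it --- leading constant vanishing, leaving only a $-\Omega(\delta)n\ln n$ exponent --- the single-point failure probability would be $\me^{-\Omega(\delta)n\ln n}$, which does \emph{not} beat a union bound over $\me^{(1+o(1))n\ln n}$ points for small $\delta$, so your argument would not close. The paper sidesteps this bookkeeping by introducing $\xi_0\coloneqq 2\xi-1$, observing that~\eqref{equ:xi-def} is equivalent to $\me^{\xi_0}/(1+\xi_0)^{1+\xi_0}=\me^{-2}$, and applying Lemma~\ref{LEMMA:Poisson-concentration}\ref{LEMMA:Poisson-concentration-a} with $\sigma=\xi_0$ to get the clean bound $\me^{-2\lambda}\le n^{-(1+2\delta)n}$ directly (after checking that $(1+\xi_0)\lambda\le(\xi+10\delta)n\ln n$). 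You correctly identified $E_2$ as the delicate point and correctly identified the role of $\xi$, so this is a sign error rather than a missing idea, but it is a step that fails as written and must be repaired.
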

\begin{proof}[Proof of Claim~\ref{CLAIM:E1-E2-E3}]
    First, we prove that $\mathbb{P}(E_1) \to 1$. 
    Fix an arbitrary point $\mathbf{y} \in P_{\mu}$. 
    The random variable $|N_{1-\varepsilon}(\mathbf{y})|$ has the Poisson distribution with mean 
    \begin{align*}
        \rho \cdot \vol(B_{1-\varepsilon}^{n})
        = \left(\frac{1}{2} + \delta\right) \frac{n \ln n}{\nu_{n}} \cdot \vol(B_{1-\varepsilon}^{n})
        = \left( \frac{1}{2} + \delta\right) (1-\varepsilon)^{n} n \ln n.  
    \end{align*}
    Recall that $\varepsilon=(n\ln n)^{-1}$. Since $0<\delta <1$ is fixed while $n$ is sufficiently large, it holds that 
    \begin{align*}
        \left(\frac{1}{2}+\delta\right)(1-\varepsilon)^{n}
        \ge \frac{1}{2}
        \quad\text{and}\quad 
        \frac{2}{2+\delta} \left(\frac{1}{2}+\delta\right)(1-\varepsilon)^{n}
        \ge \frac{1}{2} + \frac{\delta}{3}. 
    \end{align*}
    Applying Lemma~\ref{LEMMA:Poisson-concentration}~\ref{LEMMA:Poisson-concentration-b} with $\sigma = 1 - \beta$, and using~\eqref{equ:beta-def}, we obtain 
    \begin{align*}
        \mathbb{P}\left[\,|N_{1-\varepsilon}(\mathbf{y})| \le \frac{\beta}{2}\, n \ln n\,\right]
        & \le \mathbb{P}\left[\,|N_{1-\varepsilon}(\mathbf{y})| \le \left(1 - (1-\beta) \right)\left( \frac{1}{2} + \delta\right) (1-\varepsilon)^{n} n \ln n\, \right] \\
        & \le \left(\frac{\me^{\beta-1}}{\beta^{\beta}}\right)^{\left( \frac{1}{2} + \delta\right) (1-\varepsilon)^{n} n \ln n} \\
        & \le \me^{-\frac{2}{2+\delta} \cdot \left( \frac{1}{2} + \delta\right) (1-\varepsilon)^{n} n \ln n} 
        \le \me^{-\left( \frac{1}{2} + \frac{\delta}{3}\right) n \ln n}
        = n^{-\left( \frac{1}{2} + \frac{\delta}{3}\right) n}. 
    \end{align*}
    Combining this with~\eqref{equ:max-packing-upper-bound}, we obtain 
    \begin{align*}
        \mathbb{P}[E_1]
        & \ge 1 - \sum_{\mathbf{y} \in P_{\mu}} \mathbb{P}\left[\,|N_{1-\varepsilon}(\mathbf{y})| \le \frac{\beta}{2}\, n \ln n\,\right] \\
        & \ge 1 - M^{n} \left(\frac{\mu}{2}\right)^{-n} n^{-\left( \frac{1}{2} + \frac{\delta}{3}\right) n}
        = 1 - \left(\frac{n^{\delta/12}}{2M}\right)^{-n}
        \to 1 \quad\text{as}\quad n \to \infty, 
    \end{align*}
    as desired. 

    We now consider $E_2$. 
    Fix an arbitrary point $\mathbf{z} \in P_{\varepsilon}$. 
    As in the preceding proof, the random variable $|N_{1+\varepsilon}(\mathbf{z})|$ has the Poisson distribution with mean 
    \begin{align*}
        \rho \cdot \vol(B_{1+\varepsilon}^{n})
        = \left(\frac{1}{2} + \delta\right) \frac{n \ln n}{\nu_{n}} \cdot \vol(B_{1+\varepsilon}^{n})
        = \left( \frac{1}{2} + \delta\right) (1+\varepsilon)^{n} n \ln n.  
    \end{align*}
    Let $\xi_0 \coloneqq 2\xi - 1 = 2.59112...\, $. 
    Since $\xi$ is the root of~\eqref{equ:xi-def}, a straightforward calculation shows that $\xi_{0}$ is the unique real root of 
    \begin{align}\label{equ:xi0-def}
        \frac{\me^{x}}{(1+x)^{1+x}} = \frac{1}{\me^2}. 
    \end{align}
    %
    Also, we have
    \begin{align*}
        \left(\frac{1}{2} + \delta\right)(1+\varepsilon)^{n}
        \le \frac{1}{2} + 2 \delta. 
    \end{align*}
    Since $\xi = 1.79...$ is at most $5/2$, we obtain 
    \begin{align*}
        \frac{\xi + 10 \delta}{(1/2+\delta) (1+\varepsilon)^{n}} - (1+\xi_0) 
        \ge \frac{\xi + 10 \delta}{1/2 + 2\delta} -  2\xi 
        = \frac{4(5-2\xi)\delta}{1+4\delta} 
        \ge 0. 
    \end{align*}
    Now applying Lemma~\ref{LEMMA:Poisson-concentration}~\ref{LEMMA:Poisson-concentration-a} with $\sigma = \xi_0$, and using~\eqref{equ:xi0-def}, we obtain 
    \begin{align*}
        \mathbb{P}\left[\,|N_{1+\varepsilon}(\mathbf{z})| \ge (\xi + 10\delta) n \ln n\,\right]
        & \le \mathbb{P}\left[\, |N_{1+\varepsilon}(\mathbf{z})| \ge (1 + \xi_0) \left( \frac{1}{2} + \delta\right) (1+\varepsilon)^{n} n \ln n\, \right] \\
        & \le \left(\frac{\me^{\xi_0}}{(1+\xi_0)^{1+\xi_0}}\right)^{\left( \frac{1}{2} + \delta\right) (1+\varepsilon)^{n} n \ln n} \\
        & = \me^{-2 \left( \frac{1}{2} + \delta\right) (1+\varepsilon)^{n} n \ln n}
        \le \me^{-(1+2\delta) n \ln n}
        = n^{-(1+2\delta)n}. 
    \end{align*}
    Combining it with~\eqref{equ:max-packing-upper-bound}, we obtain 
    \begin{align*}
        \mathbb{P}[E_2]
        & \ge 1 - \sum_{\mathbf{z} \in P_{\varepsilon}} \mathbb{P}\left[\,|N_{1+\varepsilon}(\mathbf{z})| \ge (\xi + 10\delta) n \ln n\,\right] \\
        & \ge 1 - M^n \left(\frac{\varepsilon}{2}\right)^{-n} n^{-(1+2\delta)n} \\
        & = 1 - \left(\frac{n^{1+2\delta}}{2M n \ln n}\right)^{-n} 
        = 1 - \left(\frac{n^{2\delta}}{2M \ln n}\right)^{-n}
        \to 1 
        \quad\text{as}\quad n \to \infty, 
    \end{align*}
    as desired. 

    Finally, we consider $E_3$. 
    For a point  $\mathbf{z} \in P_{\varepsilon}$, let $B_{\mathbf{z}}$ denote the (bad) event that 
    \begin{align*}
        \text{$|N_{1-\varepsilon}(\mathbf{z})|=0$ \quad and\quad $\phi(\mathbf{z}) \in P_{\mu}$ is saturated}. 
    \end{align*}
    Next, we analyze the probability that $B_{\mathbf{z}}$ occurs. 

    Let $S \coloneqq B_{1-\varepsilon}^{n}\big(\phi(\mathbf{z})\big) \setminus B_{1-\varepsilon}^{n}(\mathbf{z})$. 
    By the definition of $\phi$, we have 
    \begin{align*}
        \|\mathbf{z}-\phi(\mathbf{z})\| 
        \le \mu 
        = n^{-\frac{1}{2} - \frac{\delta}{4}}
        \le (1-\varepsilon) n^{-\frac{1}{2} - \frac{\delta}{8}}. 
    \end{align*}
    So it follows from Lemma~\ref{LEMMA:volume-intersection-sphere} that 
    \begin{align}\label{equ:vol-S}
        \vol(S) 
        \le \frac{1+o(1)}{\sqrt{2 \pi}}\, n^{-\delta/8} \cdot \vol\big(B_{1-\varepsilon}^{n}\big)
        \le n^{-\delta/8} \nu_{n}. 
    \end{align}
    Since the random variable $|S \cap \mathbf{X}|$ has the Poisson distribution with mean 
    \begin{align*}
        \rho \cdot \vol(S) 
        = \left(\frac{1}{2}+\delta\right) \frac{n \ln n}{\nu_{n}}\, \vol(S)
        \le \left(\frac{1}{2}+\delta\right) n^{1-\frac{\delta}{8}} \ln n
        \ll \frac{\beta}{2}\, n \ln n, 
    \end{align*}
    it follows from Lemma~\ref{LEMMA:Poisson-concentration}~\ref{LEMMA:Poisson-concentration-a} and~\eqref{equ:rho-def-a} that 
    \begin{align*}
        \mathbb{P}\left[\,|S\cap \mathbf{X}| \ge \frac{\beta}{2}\, n \ln n\,\right]
        & \le \me^{- \rho \cdot \vol(S)} \left(\frac{\me \rho \cdot \vol(S)}{(\beta/2) n \ln n}\right)^{\frac{\beta}{2}\, n \ln n}\\ 
        & \le \left(\frac{\beta n \ln n}{2 \me \rho \cdot \vol(S)}\right)^{-\frac{\beta}{2}\, n \ln n}
        \le \left(\frac{\beta \nu_n }{2 \me \cdot \vol(S)}\right)^{-\frac{\beta}{2}\, n \ln n}. 
    \end{align*}
    Since $\beta = \beta(\delta)> 0$ while $n$ is sufficiently large, we have that 
    \begin{align*}
        \frac{\beta}{2\me} \ge n^{-\frac{\delta}{40}}. 
    \end{align*}
    Combining this with~\eqref{equ:vol-S}, we obtain 
    \begin{align*}
        \mathbb{P}\left[\,|S\cap \mathbf{X}| \ge \frac{\beta}{2}\, n \ln n\,\right]
        \le \left( n^{-\frac{\delta}{40}} \frac{\nu_{n}}{n^{-\delta/8} \nu_{n}}\right)^{-\frac{\beta}{2}\, n \ln n}
        = n^{-\frac{\beta \delta}{20}\, n \ln n}. 
    \end{align*}
    By definition, $B_{\mathbf{z}}$ occurs if and only if 
    \begin{align*}
        |N_{1-\varepsilon}(\mathbf{z})| = 0
        \quad\text{and}\quad 
        |N_{1-\varepsilon}(\phi(\mathbf{z}))| \ge \frac{\beta}{2}\, n \ln n, 
    \end{align*}
    which implies that all points in the set $N_{1-\varepsilon}(\phi(\mathbf{z}))$ lie within the region $S$. 
    Therefore, for any given $\mathbf{z}\in P_{\varepsilon}$, it holds that
    \begin{align*}
        \mathbb{P}[B_{\mathbf{z}}]
        \le \mathbb{P}\left[\,|S \cap \mathbf{X}| \ge \frac{\beta}{2} n \ln n\,\right]
        \le n^{-\frac{\beta \delta}{20} n \ln n}. 
    \end{align*}
    Combining it with~\eqref{equ:max-packing-upper-bound}, we obtain 
    \begin{align*}
        \mathbb{P}[E_3]
        \ge 1 - \sum_{\mathbf{z} \in P_{\varepsilon}} \mathbb{P}[B_{\mathbf{z}}]
        & \ge 1 - M^n \left(\frac{\varepsilon}{2}\right)^{-n} n^{-\frac{\beta \delta}{20}\, n \ln n} \\
        & = 1 - \mathrm{exp}\left( n \ln(2M\, n \ln n) - \frac{\beta \delta}{20} n \ln^2 n \right)
        \to 1 \quad\text{as}\quad n \to \infty. 
    \end{align*}
    This completes the proof of Claim~\ref{CLAIM:E1-E2-E3}. 
\end{proof}

Thus there is an outcome $\mathbf{X}$ of size at most $(1+\delta) \rho \cdot \vol(\mathbb{T})$ for which all of the events $E_1$, $E_2$, and $E_3$ occur. Fix any such $\mathbf{X}$.

\begin{claim}\label{CLAIM:X-is-covering}
    The set $\mathbf{X}$ is a unit ball covering of $\mathbb{T}$. Moreover, 
    \begin{align*}
        \vartheta(\mathbf{X}) \le \left(\frac{1}{2} + 2\delta\right) n \ln n
        \quad\text{and}\quad 
        \mu(\mathbf{X}) \le \left(\xi + 10 \delta\right) n \ln n. 
    \end{align*}
\end{claim}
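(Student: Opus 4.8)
The plan is to deduce all three conclusions directly from the combinatorial information recorded in the events $E_1,E_2,E_3$ together with the size bound $|\mathbf{X}|\le(1+\delta)\rho\cdot\vol(\mathbb{T})$, working entirely with torus distances. First I would recall the one structural fact that makes this legitimate: since $\mathbb{T}$ is a packing torus of $B_2^n=B_1^n-B_1^n$, the projection $\mathbb{R}^n\to\mathbb{T}$ is injective, indeed an isometry, on every radius-$1$ ball. Hence for each $\mathbf{x}\in\mathbf{X}$ we have $\vol\big(\mathbb{T}\cap(B_1^n+\mathbf{x})\big)=\vol(B_1^n)=\nu_n$, and for each $\mathbf{w}\in\mathbb{T}$ the multiplicity $\mu(\mathbf{X},\mathbf{w})$ equals $|N_{1}(\mathbf{w})|$, the number of points of $\mathbf{X}$ at torus-distance at most $1$ from $\mathbf{w}$. (Passing to the periodic extension $\Lambda+X$ and averaging over $[-r,r]^n$ as $r\to\infty$ identifies $\vartheta(\mathbf{X})$ with $\vartheta(\mathbf{X},\mathbb{T})$ and $\mu(\mathbf{X})$ with $\sup_{\mathbf{w}\in\mathbb{T}}\mu(\mathbf{X},\mathbf{w})$, so it is enough to bound these torus quantities.)

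For the covering claim, fix an arbitrary $\mathbf{w}\in\mathbb{T}$. Since $P_\varepsilon$ is a $B_\varepsilon$-covering of $\mathbb{T}$, there is $\mathbf{z}\in P_\varepsilon$ with $\|\mathbf{w}-\mathbf{z}\|\le\varepsilon$. By $E_1$ the point $\phi(\mathbf{z})\in P_\mu$ is saturated, so the second alternative in the definition of $E_3$ fails for $\mathbf{z}$; hence $|N_{1-\varepsilon}(\mathbf{z})|>0$, i.e.\ there is $\mathbf{x}\in\mathbf{X}$ with $\|\mathbf{x}-\mathbf{z}\|\le1-\varepsilon$. The triangle inequality gives $\|\mathbf{x}-\mathbf{w}\|\le(1-\varepsilon)+\varepsilon=1$, so $\mathbf{w}\in\mathbf{x}+B_1^n\subseteq\mathbf{X}+B_1^n$. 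As $\mathbf{w}$ was arbitrary, $\mathbf{X}$ is a unit ball covering of $\mathbb{T}$.

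For the multiplicity bound, fix $\mathbf{w}\in\mathbb{T}$ and again pick $\mathbf{z}\in P_\varepsilon$ with $\|\mathbf{w}-\mathbf{z}\|\le\varepsilon$. Every $\mathbf{x}\in\mathbf{X}$ with $\|\mathbf{x}-\mathbf{w}\|\le1$ then satisfies $\|\mathbf{x}-\mathbf{z}\|\le1+\varepsilon$, so $N_1(\mathbf{w})\subseteq N_{1+\varepsilon}(\mathbf{z})$ and hence $\mu(\mathbf{X},\mathbf{w})=|N_1(\mathbf{w})|\le|N_{1+\varepsilon}(\mathbf{z})|\le(\xi+10\delta)\,n\ln n$ by $E_2$; taking the supremum over $\mathbf{w}$ yields $\mu(\mathbf{X})\le(\xi+10\delta)\,n\ln n$. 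For the density, using the volume identity from the first paragraph,
\[
\vartheta(\mathbf{X},\mathbb{T})=\sum_{\mathbf{x}\in\mathbf{X}}\frac{\vol\big(\mathbb{T}\cap(B_1^n+\mathbf{x})\big)}{\vol(\mathbb{T})}=\frac{|\mathbf{X}|\,\nu_n}{\vol(\mathbb{T})}\le(1+\delta)\,\rho\,\nu_n=(1+\delta)\left(\tfrac12+\delta\right)n\ln n,
\]
and since we may assume $\delta\le\tfrac12$, we get $(1+\delta)(\tfrac12+\delta)=\tfrac12+\tfrac32\delta+\delta^2\le\tfrac12+2\delta$, so $\vartheta(\mathbf{X})\le(\tfrac12+2\delta)\,n\ln n$.

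This is a short deduction, so there is no genuine obstacle beyond bookkeeping; the one point that needs care — and the conceptual core of the whole construction — is the two-scale coupling. Covering precision is controlled at the fine scale $\varepsilon$ (so that a point of $\mathbf{X}$ within $1-\varepsilon$ of a net point $\mathbf{z}$ really covers everything within $\varepsilon$ of $\mathbf{z}$), whereas the saturation property that was easy to establish with high probability was proved only for the coarser net $P_\mu$; the map $\phi$ and the event $E_3$ are exactly what transfer ``$\phi(\mathbf{z})$ is saturated'' into ``$\mathbf{z}$ has a point of $\mathbf{X}$ nearby''. One should double-check that the two alternatives of $E_3$ are used in the right direction, namely saturated $\Rightarrow$ the second alternative fails $\Rightarrow$ the first alternative holds, and that the ball-projection facts are invoked precisely where torus volumes and multiplicities are being identified with their Euclidean counterparts.
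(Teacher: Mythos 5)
Your proof is correct and follows essentially the same route as the paper's: the covering statement via $E_1$ forcing the second alternative of $E_3$ to fail (hence a point of $\mathbf{X}$ within $1-\varepsilon$ of the nearest $P_\varepsilon$-net point, plus the triangle inequality), the multiplicity bound by transferring $\mu(\mathbf{X},\mathbf{w})$ to $|N_{1+\varepsilon}(\mathbf{z})|$ and invoking $E_2$, and the density bound from $|\mathbf{X}|\le(1+\delta)\rho\cdot\vol(\mathbb{T})$ together with $\vol\bigl(\mathbb{T}\cap(B_1^n+\mathbf{x})\bigr)=\nu_n$. The only cosmetic differences are that the paper phrases the multiplicity step as a proof by contradiction and relegates the torus-projection facts to the preamble before the proof of the theorem, whereas you state them inline.
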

\begin{proof}[Proof of Claim~\ref{CLAIM:X-is-covering}]
    First, we show that $\mathbf{X}$ is a unit ball covering of $\mathbb{T}$. 
    Fix an arbitrary point $\mathbf{w} \in \mathbb{T}$. 
    Since $P_{\varepsilon}$ is a $B_{\varepsilon}^{n}$-covering of $\mathbb{T}$, there exists a point $\mathbf{z} \in P_{\varepsilon}$ such that $\|\mathbf{w}- \mathbf{z}\| \le \varepsilon$. 
    The fact that the event $E_1$ holds gives that the point $\phi(\mathbf{z}) \in P_{\mu}$ is saturated. 
    Combining it with the fact that the event $E_3$ holds, we conclude that there exists a point $\mathbf{x} \in \mathbf{X}$ such that $\|\mathbf{z}- \mathbf{x}\| \le 1- \varepsilon$. 
    Therefore, 
    \begin{align*}
        \|\mathbf{w}- \mathbf{x}\|
        \le \|\mathbf{w}- \mathbf{z}\| + \|\mathbf{z}- \mathbf{x}\| 
        \le 1, 
    \end{align*}
    which shows that $\mathbf{X}$ is a unit ball covering of $\mathbb{T}$. 

    By definition, the covering density of $\mathbf{X}$ is 
    \begin{align*}
        \vartheta(\mathbf{X})
        = \sum_{\mathbf{x} \in \mathbf{X}} \frac{\vol\left(B_{1}^{n}(\mathbf{x}) \cap \mathbb{T}\right)}{\vol(\mathbb{T})}
        = |\mathbf{X}|\, \frac{\nu_{n}}{\vol(\mathbb{T})}
        & \le (1+\delta) \left(\frac{1}{2} + \delta\right) \frac{n \ln n}{\nu_{n}} \vol(\mathbb{T})\, \frac{\nu_{n}}{\vol(\mathbb{T})} \\
        & \le \left(\frac{1}{2} + 2\delta\right) n \ln n, 
    \end{align*}
    as desired. 

    Now suppose to the contrary that there exists a point $\mathbf{u} \in \mathbb{T}$ such that  
    \begin{align*}
        \mu(\mathbf{X}, \mathbf{u}) \ge (\xi + 10 \delta) n \ln n. 
    \end{align*}
    Since $P_{\varepsilon}$ is a $B_{\varepsilon}^{n}$ covering of $\mathbb{T}$, there exists a point $\mathbf{z} \in P_{\varepsilon}$ such that $\|\mathbf{u}- \mathbf{z}\| \le \varepsilon$. 
    Then 
    \begin{align*}
        |N_{1+\varepsilon}(\mathbf{z})|
        \ge \mu(\mathbf{X}, \mathbf{u}) \ge (\xi + 10 \delta) n \ln n, 
    \end{align*}
    which contradicts the fact that the event $E_2$ occurs. 
\end{proof}

Claim~\ref{CLAIM:X-is-covering} completes the proof of Theorem~\ref{THM:poisson-process-upper-bound}.     
\end{proof}

\section{Proofs of Theorems~\ref{THM:poisson-process-lower-bound} and~\ref{THM:poisson-process-lower-bound-cube}}\label{SEC:lower-bound}
In this section, we present the proof of Theorems~\ref{THM:poisson-process-lower-bound} and~\ref{THM:poisson-process-lower-bound-cube}. 

A convex body $K \subseteq \mathbb{R}^n$ is said to be \emph{isotropic} if $\int_{K} \mathbf{x}\, \mathrm{d}\mathbf{x} = 0$, $\vol(K) = 1$, and there exists a constant $L_K$, called the \emph{isotropic constant} of $K$, such that for every $\mathbf{y} \in \mathbb{R}^{n}$, 
\begin{align}\label{equ:def-isotropic}
    \int_K \langle \B x, \B y \rangle^2 \, \mathrm{d}\B x 
    = L_K^2 \|\B y\|^2.
\end{align}
%

It was a long-standing open problem to determine whether $L_K$ can be bounded by an absolute constant, independent of the dimension $n$. 
Culminating decades of work by many researchers, the question was recently resolved affirmatively by Guan, Klartag and Lehec~\cite{guan24,KL24}.

\begin{theorem}[Guan~\cite{guan24}, Klartag and Lehec~\cite{KL24}]\label{THM:GKL24-isotropic-constant}
    There exists an absolute constant $C$ such that the following holds for every $n \in \mathbb{N}$.
    Suppose that $K \subseteq \mathbb{R}^n$ is an isotropic convex body. Then the isotropic constant $L_{K}$ of $K$ satisfies $L_{K} \le C$. 
\end{theorem}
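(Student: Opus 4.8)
This statement is the recent affirmative resolution of Bourgain's slicing (hyperplane) conjecture, and in the present paper it is used only as a black box --- an external input to the proofs in Section~\ref{SEC:lower-bound}. Accordingly, I would not attempt to reprove it, but simply cite Guan~\cite{guan24} and Klartag--Lehec~\cite{KL24}. For orientation, here is the high-level shape of the argument those papers (building on two decades of earlier work) carry out.

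The plan is to transfer the question from the isotropic constant to the isoperimetry of log-concave measures, through two reductions followed by one hard estimate. \textbf{Step 1.} By the Eldan--Klartag transference principle, $L_K$ is controlled --- up to absolute constants and an averaging over dimensions $\le n$ --- by the \emph{thin-shell parameter} $\sigma_n$, the largest possible standard deviation of $\|X\|$ over isotropic log-concave random vectors $X$ in $\mathbb{R}^m$ with $m \le n$. Hence it suffices to show $\sigma_n = O(1)$. \textbf{Step 2.} The thin-shell parameter is in turn bounded, up to an absolute constant, by the Kannan--Lov\'asz--Simonovits isoperimetric constant $\psi_n$ of isotropic log-concave measures: by the Poincar\'e inequality and the fact that the Euclidean norm is $1$-Lipschitz, $\sigma_n^2$ is at most the Poincar\'e constant, which is $O(\psi_n^2)$ by Cheeger's inequality. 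So it is enough to establish the KLS conjecture up to an absolute constant, $\psi_n = O(1)$.

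\textbf{Step 3.} This is the core, and the content of~\cite{guan24,KL24}. One runs Eldan's stochastic localization process: a measure-valued martingale $(\mu_t)_{t\ge 0}$ started at the given isotropic log-concave measure which tilts it infinitesimally by random linear functionals, so that $\mu_t$ becomes progressively more concentrated while its barycentre performs a driftless diffusion. Writing $A_t$ for the covariance matrix of $\mu_t$, one shows that the isoperimetric constant of the original measure is governed by how fast the operator norm of $A_t$ grows along the process. Successive analyses of this SDE system gave $\psi_n = n^{o(1)}$ (Chen) and then $\psi_n = O(\mathrm{polylog}\,n)$ (Klartag--Lehec; Jambulapati--Lee--Vempala); the final ingredient is a sharp bound on trace-type functionals of $A_t$ under the log-concavity constraint, due to Guan~\cite{guan24}, which Klartag--Lehec~\cite{KL24} turn into $\psi_n = O(1)$ and hence, back through Steps 2 and 1, into $L_K = O(1)$.

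The main obstacle is precisely Step 3 in its sharpest form: removing the polylogarithmic loss to obtain a genuinely dimension-free bound on the KLS constant. This is exactly what~\cite{guan24,KL24} achieve, and it lies far outside the scope of the present paper, which only needs Theorem~\ref{THM:GKL24-isotropic-constant} as an off-the-shelf tool.
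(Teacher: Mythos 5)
Your treatment matches the paper exactly: Theorem~\ref{THM:GKL24-isotropic-constant} is stated there purely as an external citation to Guan~\cite{guan24} and Klartag--Lehec~\cite{KL24}, with no proof given, and using it as a black box is the intended reading. Your background sketch of the slicing-to-thin-shell-to-KLS reduction and the stochastic localization argument is accurate but not required.
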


Since every convex body can be made isotropic via a suitable linear transformation (see e.g.~
\cite[Solution to Exercise 9.17]{BalestroMartiniTeixeira25cgpves}), it suffices to prove Theorem~\ref{THM:poisson-process-lower-bound} for isotropic convex bodies. 
The following lemma will be used in the proof. 
\begin{lemma}\label{LEMMA:small-overlap}
    Suppose that $K \subseteq \mathbb{R}^{n}$ is an isotropic convex body and $\mathbf{x} \in \mathbb{R}^{n}$ is a vector satisfying $\|\mathbf{x}\| \ge 4 L_{K}$. 
    Then 
    \begin{align*}
        \vol\left( K \cap (K + \mathbf{x}) \right)
        \le 3^{-\frac{\|\mathbf{x}\|}{8L_{K}}}. 
    \end{align*}
\end{lemma}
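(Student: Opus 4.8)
The plan is to bound the volume of the overlap $K\cap(K+\mathbf x)$ by restricting attention to a halfspace and then using a one-dimensional marginal argument together with Markov's inequality in the direction of $\mathbf x$. Set $\mathbf u = \mathbf x/\|\mathbf x\|$, and let $H^- = \{\mathbf y : \langle \mathbf y,\mathbf u\rangle \le \|\mathbf x\|/2\}$ and $H^+ = \{\mathbf y : \langle \mathbf y,\mathbf u\rangle \ge \|\mathbf x\|/2\}$. A point of $K\cap(K+\mathbf x)$ either lies in $H^+$, in which case it is a point of $K$ with $\langle \mathbf y,\mathbf u\rangle \ge \|\mathbf x\|/2$; or it lies in $H^-$, in which case $\mathbf y - \mathbf x \in K$ has $\langle \mathbf y - \mathbf x,\mathbf u\rangle = \langle \mathbf y,\mathbf u\rangle - \|\mathbf x\| \le -\|\mathbf x\|/2$, i.e.\ $\mathbf y - \mathbf x$ is a point of $K$ with $\langle \cdot,\mathbf u\rangle \le -\|\mathbf x\|/2$. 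Hence, using $\vol(K)=1$ and translation invariance of Lebesgue measure,
\begin{align*}
    \vol\big(K\cap(K+\mathbf x)\big)
    \le \vol\big(\{\mathbf y\in K : \langle\mathbf y,\mathbf u\rangle \ge \|\mathbf x\|/2\}\big)
    + \vol\big(\{\mathbf y\in K : \langle\mathbf y,\mathbf u\rangle \le -\|\mathbf x\|/2\}\big)
    = \mathbb{P}\big[\,|\langle\mathbf Y,\mathbf u\rangle| \ge \|\mathbf x\|/2\,\big],
\end{align*}
where $\mathbf Y$ is uniform on $K$ (a probability measure since $\vol(K)=1$).

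The next step is to control the tail of the one-dimensional random variable $Z \coloneqq \langle \mathbf Y,\mathbf u\rangle$. Since $K$ is isotropic, $\mathbb{E}[Z] = \langle \int_K \mathbf y\,\mathrm d\mathbf y,\mathbf u\rangle = 0$ and, by~\eqref{equ:def-isotropic} with $\mathbf y = \mathbf u$, $\mathbb{E}[Z^2] = L_K^2$. The key analytic input is the standard fact that one-dimensional marginals of an isotropic (more generally, log-concave) convex body have sub-exponential tails with the right scale: there is an absolute constant so that $\mathbb{P}[\,|Z| \ge t L_K\,] \le \exp(-ct)$ for all $t\ge 0$. One clean way to get this with explicit constants is via the log-concavity of $f(s) \coloneqq \vol_{n-1}(K\cap\{\langle\cdot,\mathbf u\rangle = s\})$ (Brunn--Minkowski / Prékopa--Leindler): the density of $Z$ is log-concave with variance $L_K^2$, and a log-concave density $g$ on $\mathbb{R}$ with $\int g = 1$, mean $0$ and variance $L_K^2$ satisfies $g(s) \le C_1/L_K$ and $\mathbb{P}[\,|Z|\ge t\,] \le \exp(-t/(C_2 L_K))$ for absolute constants $C_1,C_2$. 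Plugging in $t = \|\mathbf x\|/2$ and using $\|\mathbf x\| \ge 4L_K$ (so the "large deviation" regime has kicked in and any additive error from the body of the distribution is absorbed), one gets a bound of the form $\exp(-\|\mathbf x\|/(c' L_K))$, and choosing the constants so that $1/c' \ge (\ln 3)/8$ yields exactly $3^{-\|\mathbf x\|/(8L_K)}$. The hypothesis $\|\mathbf x\|\ge 4L_K$ is precisely what lets one replace a bound like $C\exp(-\|\mathbf x\|/(cL_K))$ by the clean form $3^{-\|\mathbf x\|/(8L_K)}$ without a leading constant.

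The main obstacle is pinning down the explicit numerical constant $8$ in the exponent: the qualitative sub-exponential tail bound for log-concave marginals is classical, but to land on $3^{-\|\mathbf x\|/(8L_K)}$ one must track constants through whichever tail estimate one invokes, and verify that the regime $\|\mathbf x\|\ge 4L_K$ suffices to absorb the multiplicative constant into the exponent. I expect the cleanest route is: (i) reduce, as above, to a tail bound for the log-concave marginal density $g$ of $Z$ with $\int g =1$, mean $0$, $\int s^2 g(s)\,\mathrm ds = L_K^2$; (ii) quote (or re-derive from Brunn--Minkowski) that such a $g$ satisfies a bound of the form $g(s)\le 1/L_K$ up to an absolute constant together with exponential decay, for instance $g(s) \le (c_0/L_K)\exp(-|s|/(c_1 L_K))$; (iii) integrate the tail from $\|\mathbf x\|/2$ to $\infty$; (iv) use $\|\mathbf x\| \ge 4L_K$ to check the resulting constant-times-exponential is at most $3^{-\|\mathbf x\|/(8L_K)}$. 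Everything after step~(i) is a routine, if slightly fiddly, one-variable computation.
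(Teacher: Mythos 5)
Your reduction is essentially the paper's, phrased with two halfspaces instead of one symmetric slab: the paper sets $S \coloneqq \{\mathbf y : |\langle \mathbf x,\mathbf y\rangle| < \|\mathbf x\|^2/2\}$, observes $S\cap(S+\mathbf x)=\emptyset$, and bounds $\vol(K\cap(K+\mathbf x))\le 2\vol(K\setminus S)$; your version, summing the two one-sided tails, gives the marginally sharper bound $\mathbb P[\,|Z|\ge\|\mathbf x\|/2\,]=\vol(K\setminus S)$. Either way the problem becomes a tail estimate for the log-concave marginal $Z=\langle\mathbf Y,\mathbf u\rangle$ with $\mathbb E[Z]=0$ and $\mathbb E[Z^2]=L_K^2$, exactly as you say.

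The gap is in how you propose to finish. You invoke a generic sub-exponential tail bound $\mathbb P[\,|Z|\ge tL_K\,]\le \exp(-ct)$ (or $g(s)\le (c_0/L_K)\exp(-|s|/(c_1L_K))$) with unspecified absolute constants and then say one "chooses the constants so that $1/c'\ge(\ln 3)/8$". But the constants are not yours to choose, and the target exponent leaves essentially no slack: after substituting $s=\|\mathbf x\|/2$ you need decay at rate at least $\ln 3/(4L_K)$ per unit length, which is \emph{exactly} what the sharp version of this estimate delivers and which a loosely quoted one need not. The paper's route pins this down: Markov's inequality applied to $Z^2$ gives $\mathbb P[\,|Z|\ge 2L_K\,]\le 1/4$, and Borell's lemma (amplification for symmetric convex sets under a log-concave measure, proved via the multiplicative Brunn--Minkowski inequality) upgrades this to $\mathbb P[\,|Z|\ge 2tL_K\,]\le \tfrac{\sqrt 3}{4}\,3^{-t/2}$ for $t\ge 1$; taking $t=\|\mathbf x\|/(4L_K)$ — and this, rather than absorbing a prefactor, is where the hypothesis $\|\mathbf x\|\ge 4L_K$ is actually used, namely to ensure $t\ge1$ — yields $\tfrac{\sqrt3}{4}\,3^{-\|\mathbf x\|/(8L_K)}$, and the prefactor (even doubled) is below $1$. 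So your outline is the right one, but step (ii) is not a routine computation to be deferred: it is Borell's lemma with the specific input $\theta=3/4$, and you must actually run it to see that the resulting base and exponent match the statement.
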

\begin{proof}[Proof of Lemma~\ref{LEMMA:small-overlap}]
    Fix  a vector $\mathbf{x} \in \mathbb{R}^{n}$ satisfying $\|\mathbf{x}\| \ge 4 L_{K}$. 
    For every real number $w \in \mathbb{R}$, define the \emph{slab of width $w$ in the direction of $\mathbf{x}$} as 
    \begin{align*}
        S_{\mathbf{x}}(w)
        \coloneqq \left\{\mathbf{y} \colon |\langle \B x, \B y \rangle| < w\, \|\mathbf{x}\| \right\}. 
    \end{align*}
    Markov’s inequality applied to the random variable $\langle \B x, \B y \rangle^2$, where $\B y$ is a uniform element of $K$, implies by~\eqref{equ:def-isotropic}  that 
    \begin{align}\label{eq:3/4}
        \vol\left(K \setminus S_{\mathbf{x}}(2L_{K})\right)
        \le \frac{1}{4}. 
    \end{align}

 A well-known argument due to Borell~{\cite[Lemma~3.1]{Borell74}} gives that 
    \begin{align}\label{eq:t}
        \vol\left(K \setminus S_{\mathbf{x}}(2tL_K)\right)
        \le 
        \frac{\sqrt{3}}{4} \cdot 3^{-\frac{t}{2}},
        \quad\text{for every}\quad t \ge 1. 
    \end{align}
For completeness, let us  sketch a proof of~\eqref{eq:t}. The uniform probability measure $\mu_K$ on $K$ is clearly \emph{logarithmically concave}, meaning that the function $-\log(\dd \mu_K/\dd \mu)$ (which, in this case, is $0$ on $K$ and $\infty$ elsewhere) is convex. By a theorem of Borell~\cite[Theorem~3.2]{Borell75}, $\mu_K$ satisfies the Multiplicative Brunn-Minkowski Inequality. 
Next, it is routine to check that, for any convex centrally symmetric $A$, it holds that
   \[
   \mathbb{R}^n\setminus A\supseteq \frac{2}{t+1}\,(\mathbb{R}^n\setminus tA)+\frac{t-1}{t+1}\, A.
   \]
Evaluating $\mu_K$ on this inclusion for the slab $A\coloneqq S_{\mathbf{x}}(2L_K)$, we obtain that
   \[
   \vol(K\setminus A)\ge \mu_K\left(\frac{2}{t+1}\,(\mathbb{R}^n\setminus tA)+\frac{t-1}{t+1}\, A\right)\ge \vol(K\setminus tA)^{\frac2{t+1}}\cdot\vol(K\cap A)^{\frac{t-1}{t+1}},
   \]
   which implies~\eqref{eq:t} by~\eqref{eq:3/4}.

    In particular, by applying~\eqref{eq:t} with $t \coloneqq {\|\mathbf{x}\|}/(4L_{K})$ and denoting $S \coloneqq S_{\mathbf{x}}\left(\|\mathbf{x}\|/2\right)$ for brevity, we obtain 
    \begin{align*}
        \vol\left(K \setminus S\right)
        \le \frac{\sqrt{3}}{4} \cdot 3^{-\frac{\|\mathbf{x}\|}{8L_{K}}}. 
    \end{align*}
    \begin{claim}\label{CLAIM:S-Sx-empty-intersection}
        We have that $S \cap (S + \mathbf{x}) = \emptyset$. 
    \end{claim}
    \begin{proof}[Proof of Claim~\ref{CLAIM:S-Sx-empty-intersection}]
        Suppose to the contrary that there exists $\mathbf{y} \in S \cap (S + \mathbf{x})$. 
        Then $\mathbf{y} \in S$ and $\mathbf{y} - \mathbf{x} \in S$. 
        It follows from the definition of $S$ that 
        \begin{align*}
            \|\mathbf{x}\|^2
            = |\langle \mathbf{x}, \mathbf{x} \rangle|
            = |\langle \mathbf{y}, \mathbf{x} \rangle - \langle \mathbf{y} - \mathbf{x}, \mathbf{x} \rangle|
            \le |\langle \mathbf{y}, \mathbf{x} \rangle| + |\langle \mathbf{y} - \mathbf{x}, \mathbf{x} \rangle|
            < \frac{\|\mathbf{x}\|^2}{2} + \frac{\|\mathbf{x}\|^2}{2}
            = \|\mathbf{x}\|^2, 
        \end{align*}
        a contradiction. 
    \end{proof}
    Let $\overline{S} \coloneqq K \setminus S$. 
    It follows from Claim~\ref{CLAIM:S-Sx-empty-intersection} that 
    \begin{align*}
        K \cap (K + \mathbf{x})
        \subseteq \overline{S} \cup (\overline{S} + \mathbf{x}). 
    \end{align*}
    Therefore, 
    \begin{align*}
        \vol\left(K \cap (K + \mathbf{x})\right)
        \le \vol(\overline{S}) + \vol(\overline{S} + \mathbf{x})
        = 2 \cdot \vol(K \setminus S)
        \le 2 \cdot \frac{\sqrt{3}}{4} \cdot 3^{-\frac{\|\mathbf{x}\|}{8L_{K}}}
        \le 3^{-\frac{\|\mathbf{x}\|}{8L_{K}}},  
    \end{align*}
    which proves Lemma~\ref{LEMMA:small-overlap}. 
\end{proof}

We will also use the following standard result from probability, which can be found in e.g.~{\cite[Theorem~4.3.1]{AS16}}. 
\begin{lemma}\label{LEMMA:second-moment}
    Suppose that $X$ is a nonnegative, integral-valued random variable. 
    Then 
    \begin{align*}
        \mathbb{P}[\,X = 0\,] \le \frac{\mathrm{Var}[X]}{\mathbb{E}[X]^2}, 
    \end{align*}
    where $\mathrm{Var}[X]$ and $\mathbb{E}[X]$ denote respectively the variance and the expectation of $X$.
\end{lemma}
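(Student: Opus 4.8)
The plan is to derive the inequality from Chebyshev's inequality. First I would dispose of the degenerate case: if $\mathbb{E}[X]=0$ then, since $X\ge 0$ and $X$ is integer-valued, $X=0$ almost surely, and the asserted bound is then to be read with the usual convention (or simply excluded); so I may assume $\mathbb{E}[X]>0$, which in particular makes the right-hand side well defined. The key observation is the set inclusion
\begin{align*}
    \{X=0\}\ \subseteq\ \{\,|X-\mathbb{E}[X]|\ge\mathbb{E}[X]\,\},
\end{align*}
which holds because on the event $\{X=0\}$ one has $|X-\mathbb{E}[X]|=\mathbb{E}[X]$. Applying Chebyshev's inequality to the event on the right-hand side, with the (positive) threshold $\mathbb{E}[X]$, then yields
\begin{align*}
    \mathbb{P}[\,X=0\,]\ \le\ \mathbb{P}\big[\,|X-\mathbb{E}[X]|\ge\mathbb{E}[X]\,\big]\ \le\ \frac{\mathrm{Var}[X]}{\mathbb{E}[X]^2},
\end{align*}
which is exactly the claim.

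As an alternative route (which in fact gives the slightly stronger intermediate bound $\mathbb{P}[X=0]\le\mathrm{Var}[X]/\mathbb{E}[X^2]$), I would instead write $X=X\cdot\mathbbm{1}_{\{X>0\}}$ and apply the Cauchy--Schwarz inequality to obtain $\mathbb{E}[X]^2\le\mathbb{E}[X^2]\cdot\mathbb{P}[X>0]$, hence $\mathbb{P}[X>0]\ge\mathbb{E}[X]^2/\mathbb{E}[X^2]$; taking complements and then using $\mathbb{E}[X^2]\ge\mathbb{E}[X]^2$ recovers the stated form. Here the nonnegativity of $X$ enters only through the identity $X=X\cdot\mathbbm{1}_{\{X>0\}}$.

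There is essentially no obstacle here: the only points that merit a word of care are the degenerate case $\mathbb{E}[X]=0$ and the remark that nonnegativity and integrality are not strictly needed for the Chebyshev argument beyond ensuring $\mathbb{E}[X]>0$ (these hypotheses are stated because in the intended applications $X$ is a counting random variable). I would present the Chebyshev argument as the main line, since it is the shortest and self-contained.
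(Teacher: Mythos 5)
Your Chebyshev argument is correct and is exactly the standard proof of this lemma; the paper itself does not prove it but cites it as \cite[Theorem~4.3.1]{AS16}, where the same Chebyshev derivation is used. Your handling of the degenerate case $\mathbb{E}[X]=0$ and the alternative Cauchy--Schwarz route are both fine, so there is nothing to fix.
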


Before proving Theorem~\ref{THM:poisson-process-lower-bound}, we first establish the following result.
\begin{theorem}\label{THM:poisson-process-lower-bound-isotropic} Let $n\to\infty$.
    Let $K \subseteq \mathbb{R}^{n}$ be an isotropic convex body and let 
     $\mathbb{T}$ be a packing torus of $K-K$. Suppose that  $P \subseteq \mathbb{T}$ is a $B_{1/2}^{n}$-packing 
     and $\omega(n)\to \infty$ is a function such that
    \begin{align*}
        \rho
        \coloneqq \ln |P| - n \big( \ln\ln\ln |P| + \omega(n) \big) 
    \end{align*}
    tends to infinity.  Let $\mathbf{X} \subseteq \mathbb{T}$ be a set sampled from the Poisson point process on $\mathbb{T}$ with intensity $\rho$.
    Then $\mathbb{P}\left[\,P \subseteq K + \mathbf{X}\,\right]$ tends to $0$. 
\end{theorem}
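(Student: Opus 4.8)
The plan is to use the second-moment method (Lemma~\ref{LEMMA:second-moment}) applied to a suitable counting random variable. Since $P\subseteq\mathbb{T}$ is a $B_{1/2}^n$-packing, the balls $B_{1/2}^n(\mathbf{p})$, $\mathbf{p}\in P$, are pairwise disjoint in $\mathbb{T}$; in particular the points of $P$ are at pairwise distance $\ge 1$. For each $\mathbf{p}\in P$ say that $\mathbf{p}$ is \emph{uncovered} if $\mathbf{p}\notin K+\mathbf{X}$, i.e.\ no point of $\mathbf{X}$ lies in $\mathbf{p}-K$ (equivalently in $(-K)+\mathbf{p}$, a translate of $-K$, which has volume $\vol(K)=1$ since $K$ is isotropic). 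Let $X\coloneqq\sum_{\mathbf{p}\in P}\mathbbm{1}[\mathbf{p}\text{ uncovered}]$. If $P\subseteq K+\mathbf{X}$ then $X=0$, so it suffices to show $\mathrm{Var}[X]/\mathbb{E}[X]^2\to 0$, or more conveniently that $\mathbb{E}[X]\to\infty$ and $\mathbb{E}[X(X-1)]/\mathbb{E}[X]^2\to 1$.

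**Next I would compute the first moment.** By the defining property of the Poisson point process, $|\mathbf{X}\cap(\mathbf{p}-K)|\sim\mathrm{Pois}(\rho\vol(K))=\mathrm{Pois}(\rho)$, so $\mathbb{P}[\mathbf{p}\text{ uncovered}]=\me^{-\rho}$ and $\mathbb{E}[X]=|P|\,\me^{-\rho}$. Plugging in $\rho=\ln|P|-n(\ln\ln\ln|P|+\omega(n))$ gives $\mathbb{E}[X]=\exp\big(n(\ln\ln\ln|P|+\omega(n))\big)$; this tends to infinity because $\omega(n)\to\infty$ (and in any case $|P|$ is at least, say, exponential in $n$, so $\ln\ln\ln|P|$ is bounded below). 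So the first moment is large, as needed.

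**The heart of the argument is the second moment.** We have $\mathbb{E}[X(X-1)]=\sum_{\mathbf{p}\ne\mathbf{q}}\mathbb{P}[\mathbf{p},\mathbf{q}\text{ both uncovered}]$. For a fixed pair, both are uncovered iff $\mathbf{X}$ misses $(\mathbf{p}-K)\cup(\mathbf{q}-K)$; by the Poisson property this probability is $\exp\!\big(-\rho\,\vol\big((\mathbf{p}-K)\cup(\mathbf{q}-K)\big)\big)=\exp\!\big(-\rho(2-\vol((\mathbf{p}-K)\cap(\mathbf{q}-K)))\big)=\me^{-2\rho}\cdot\me^{\rho\,v(\mathbf{p},\mathbf{q})}$, where $v(\mathbf{p},\mathbf{q})\coloneqq\vol\big((\mathbf{p}-K)\cap(\mathbf{q}-K)\big)=\vol\big(K\cap(K+(\mathbf{q}-\mathbf{p}))\big)$ (translation invariance, and $-K$ is just another isotropic-type body with the same overlap structure; more carefully, $\vol((\mathbf{p}-K)\cap(\mathbf{q}-K))=\vol((-K)\cap((-K)+\mathbf{q}-\mathbf{p}))$, and one checks Lemma~\ref{LEMMA:small-overlap} applies to $-K$ as well since $-K$ is isotropic with $L_{-K}=L_K$). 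Since $\|\mathbf{q}-\mathbf{p}\|\ge 1$ for distinct $\mathbf{p},\mathbf{q}\in P$ and $L_K\le C$ by Theorem~\ref{THM:GKL24-isotropic-constant}, Lemma~\ref{LEMMA:small-overlap} gives $v(\mathbf{p},\mathbf{q})\le 3^{-\|\mathbf{q}-\mathbf{p}\|/(8C)}$ once $\|\mathbf{q}-\mathbf{p}\|\ge 4C$, while for $1\le\|\mathbf{q}-\mathbf{p}\|<4C$ we only have the trivial bound $v(\mathbf{p},\mathbf{q})\le 1$. Therefore
\begin{align*}
\frac{\mathbb{E}[X(X-1)]}{\mathbb{E}[X]^2}
= \frac{1}{|P|^2}\sum_{\mathbf{p}\ne\mathbf{q}} \me^{\rho\, v(\mathbf{p},\mathbf{q})}
\le \frac{1}{|P|^2}\sum_{\mathbf{p}\ne\mathbf{q}} \exp\!\big(\rho\, 3^{-\|\mathbf{q}-\mathbf{p}\|/(8C)}\big),
\end{align*}
and the main task is to show this is $1+o(1)$, equivalently that the ``correlated'' pairs contribute negligibly. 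The dangerous pairs are those at bounded distance: if $\|\mathbf{q}-\mathbf{p}\|\le 8C\log_3(2\rho/n)$ then $\me^{\rho v}$ could be as large as roughly $\me^{\rho/(2\rho/n)}=\me^{n/2}$, which is enormous. However, because $P$ is a $B_{1/2}^n$-packing, a volume argument bounds the number of $\mathbf{q}\in P$ within distance $R$ of a fixed $\mathbf{p}$ by $\vol(B_{R+1/2}^n)/\vol(B_{1/2}^n)=(2R+1)^n$. Choosing the threshold $R_0$ so that $3^{-R_0/(8C)}\rho = n\ln(3R_0)$ (say), the contribution of pairs with $\|\mathbf{q}-\mathbf{p}\|\le R_0$ to $\frac{1}{|P|^2}\sum\me^{\rho v}$ is at most $\frac{1}{|P|}(2R_0+1)^n\,\me^{n\ln(3R_0)}$, and one checks — this is where the precise form $\rho=\ln|P|-n\ln\ln\ln|P|-n\omega(n)$ enters — that $R_0$ grows only like $\log_3(\rho/n)\approx\log_3\log|P|$, so $(2R_0+1)^n\me^{n\ln(3R_0)}=|P|^{o(1)}\cdot(\ln\ln|P|)^{O(n)}$, which is $\me^{O(n\ln\ln\ln|P|)}=\me^{o(\rho)}=|P|^{o(1)}$; dividing by $|P|$ still gives $o(1)$. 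For pairs with $\|\mathbf{q}-\mathbf{p}\|>R_0$ we have $\rho\, v(\mathbf{p},\mathbf{q})\le n\ln(3R_0)\cdot 3^{-(\|\mathbf{q}-\mathbf{p}\|-R_0)/(8C)}$ decaying geometrically, so $\me^{\rho v}\le 1+O(\rho v)$ and summing $\sum_{\mathbf{q}}\rho v(\mathbf{p},\mathbf{q})$ over a packing converges geometrically and is $o(|P|)$ after the outer normalisation. Hence $\mathbb{E}[X(X-1)]/\mathbb{E}[X]^2=1+o(1)$, and combined with $\mathbb{E}[X]\to\infty$, Lemma~\ref{LEMMA:second-moment} yields $\mathbb{P}[X=0]\to 0$, i.e.\ $\mathbb{P}[P\subseteq K+\mathbf{X}]\to 0$.

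**The main obstacle** is the balance in the second-moment estimate: one needs the near-neighbour pairs (where the overlap $v$, though tiny, gets multiplied by the huge factor $\rho$) to contribute $o(|P|^2)$, and this is exactly why the theorem must subtract the $n\ln\ln\ln|P|$ (and the slack $n\omega(n)$) from the naive threshold $\ln|P|$ — it provides just enough room to absorb the $(2R_0+1)^n$-type entropy factor coming from how many lattice-like points of a packing can cluster near a given point. Keeping careful track of the interplay between $\rho$, $|P|$, the packing bound $(2R+1)^n$, and the exponential decay rate $8C$ from Lemma~\ref{LEMMA:small-overlap} is the delicate part; everything else is the standard first-moment/second-moment bookkeeping.
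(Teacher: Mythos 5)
Your overall strategy coincides with the paper's: apply the second-moment method to the number of uncovered points of $P$, compute the first moment $|P|\me^{-\rho}=\me^{n(\ln\ln\ln|P|+\omega(n))}\to\infty$, split the pairs by a distance threshold of order $\log_3\rho$, count the near pairs by a packing-volume argument, and control the far pairs via Lemma~\ref{LEMMA:small-overlap}. However, the execution of the far-pair estimate has a genuine gap. Your threshold $R_0$ is chosen so that $\rho\cdot 3^{-R_0/(8C)}=n\ln(3R_0)$, a quantity that tends to infinity; hence for pairs just beyond $R_0$ you may not write $\me^{\rho v}\le 1+O(\rho v)$ (this needs $\rho v=O(1)$), and, more seriously, the claim that $\sum_{\mathbf q}\rho\, v(\mathbf p,\mathbf q)$ over far $\mathbf q$ ``converges geometrically'' fails: the number of points of a $B_{1/2}^n$-packing in the shell $\|\mathbf q-\mathbf p\|\in[R,R+1]$ can be as large as $(2R+3)^n$, and $(2R)^n\cdot 3^{-R/(8C)}$ is \emph{increasing} in $R$ up to $R\asymp n$, where it reaches $\me^{n\ln n+O(n)}$. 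Since in the intended application $|P|=\me^{(1/2+o(1))n\ln n}$, this intermediate-range contribution is not $o(|P|)$, so the far-pair bound does not close. (A smaller slip: for pairs at distance $O(1)$ the overlap $v$ is only bounded by an absolute constant, so $\me^{\rho v}$ can be as large as $\me^{c\rho}$ rather than $\me^{n\ln(3R_0)}$; this one is harmless, since the trivial bound $\me^{\rho v}\le\me^{\rho}$ still yields $o(1)$ after dividing by $|P|$, which is exactly what the paper uses for near pairs via $\mathbb{E}[B_{\mathbf p}B_{\mathbf p'}]\le\mathbb{E}[B_{\mathbf p}]$.)

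The repair — and the paper's actual choice — is to raise the threshold to $\Delta=8L_K(f_n+\log_3\rho)$ with $f_n\to\infty$ slowly, so that $\rho\cdot v\le 3^{-f_n}=o(1)$ \emph{uniformly} over all pairs beyond the threshold. Then every far pair contributes $\me^{-2\rho}\big(\me^{3^{-f_n}}-1\big)=o(\me^{-2\rho})$ and the trivial count $|P|^2$ of far pairs suffices, with no shell-by-shell summation at all. Crucially, this larger threshold is still only $O(\log_3\rho)=O(\ln\ln\ln|P|\cdot\text{const}+\ln\ln|P|)$-free, i.e.\ $\Delta=O(\log_3\ln|P|)$, so the near-pair count $(4\Delta)^n=\me^{n(\ln\ln\ln|P|+O(1))}$ is absorbed by the factor $\me^{-n(\ln\ln\ln|P|+\omega(n))}$ coming from $\mathbb{E}[B]=|P|\me^{-\rho}$ — precisely the balance you correctly identified as the heart of the matter. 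One further point you gloss over and the paper handles explicitly: on the torus one must check that $\mathbf p-\mathbf p'$ has at most one lift $\mathbf q\in\mathbb{R}^n$ with $K\cap(K+\mathbf q)\ne\emptyset$, and that every such lift has norm at least the threshold, before Lemma~\ref{LEMMA:small-overlap} can be applied.
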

\begin{proof}[Proof of Theorem~\ref{THM:poisson-process-lower-bound-isotropic}] 
For every point $\mathbf{p} \in P$, let $B_{\mathbf{p}}$ be the characteristic function of the event that $\B p \notin \mathbf{X} + K$, that is, 
    \begin{align*}
        B_{\mathbf{p}}
        = 
        \begin{cases}
            1, &\quad\text{if}\quad \B p \notin \mathbf{X} + K, \\
            0, &\quad\text{if}\quad \B p \in \mathbf{X} + K.
        \end{cases}
    \end{align*}
    Since $\vol(K) = 1$ (by the definition of isotropic), the random variable $|\mathbf{X} \cap (\mathbf{p} - K)|$ has the Poisson distribution with mean $\rho \cdot \vol(\mathbf{p} - K) = \rho$. 
    Therefore, $\mathbb{E}[B_{\mathbf{p}}] = \me^{-\rho}$.
    It follows that the sum $B \coloneqq \sum_{\mathbf{p} \in P}B_{\mathbf{p}}$ satisfies 
    \begin{align}\label{equ:E-B}
        \mathbb{E}[B] 
        = |P|\, \me^{-\rho}
        = \me^{n \left( \ln\ln\ln |P| + \omega(n) \right)}
        \to \infty
        \quad\text{as}\quad 
        n \to \infty. 
    \end{align}
    Note that $B$ is the number of points in $P$ that are not covered by $\mathbf{X} + K$.  
    We will use Lemma~\ref{LEMMA:second-moment} (that is, the second moment method) to show that $\PP[\,B = 0\,]\to 0$ as $n \to \infty$.

    Let $\Delta \coloneqq 8 L_K (f_n + \log_3 \rho)$, where $f_n \to \infty$ very slowly (as $n \to \infty$). 
    Let 
    \begin{align*}
        \Sigma_1 \coloneqq 
        \sum_{\substack{\B p,\B p'\in P\\ \|\B p - \B p'\| \leq \Delta}} \mathbb{E}[B_\B p B_{\B p'}]
        \quad\text{and}\quad 
        \Sigma_2 \coloneqq 
        \sum_{\substack{\B p,\B p'\in P\\ \|\B p - \B p'\| > \Delta}} \left( \mathbb{E}[B_\B p B_{\B p'}] - \mathbb{E}[B_\B p]\, \mathbb{E}[B_{\B p'}] \right). 
    \end{align*}
    Since $P$ is a $B_{1/2}^{n}$-packing of $\mathbb{T}$, for every point $\mathbf{p} \in P$ the number of points in $P$ satisfying $\|\B p - \B p'\| \leq \Delta$ is at most 
    \begin{align*}
        \frac{\vol\big(B_{\Delta+1/2}^{n}(\mathbf{p})\big)}{\vol\big(B_{1/2}^{n}\big)}
        = \left(\frac{\Delta + 1/2}{1/2}\right)^{n}
        \le \left( 4\Delta \right)^{n}. 
    \end{align*}
    Notice that 
    \begin{align*}
         \ln&\left(\frac{\left( 32 L_K (f_n + \log_3 \rho) \right)^{n}}{|P| \me^{-\rho}}\right) \\
        & = n \ln\left(32 L_K (f_n + \log_3 \rho)\right) - n \big( \ln\ln\ln |P| + \omega(n) \big) \\
        & = n \ln\left( f_n + \frac{\ln \rho}{\ln 3}\right) + n \ln (32 L_K) - n \big( \ln\ln\ln |P| + \omega(n) \big) \\
        & \le n \ln\ln \rho  + n \ln (32 L_K) - n \big( \ln\ln\ln |P| + \omega(n) \big) 
        \le n \big( \ln (32 L_K) - \omega(n) \big)
    \end{align*}
    which, by Theorem~\ref{THM:GKL24-isotropic-constant}, tends to $-\infty$ as $n \to \infty$. 
    Therefore, by~\eqref{equ:E-B}, 
    \begin{align}\label{equ:Sigma-1-upper-bound}
        \Sigma_1
        \le \sum_{\mathbf{p} \in P} \mathbb{E}[B_{\mathbf{p}}] \left(4\Delta\right)^{n} 
        = \mathbb{E}[B] \left(4\Delta\right)^{n}
        = \mathbb{E}[B]^2 \frac{\left( 32 L_K (f_n + \log_3 \rho) \right)^{n}}{|P|\, \me^{-\rho}} 
        = o\left(\mathbb{E}[B]^{2}\right).
    \end{align}

    Next, we consider $\Sigma_2$. 
    Fix two points $\mathbf{p},\mathbf{p}' \in P$ at distance at least $\Delta$ on the torus $\mathbb{T}$. 
    Note that $B_{\mathbf{p}} B_{\mathbf{p}'}$ is 1 if and only if $\mathbf{X}$ does not intersect $(\mathbf{p} - K) \cup (\mathbf{p}' - K)$.
\hide{
We claim that the quotient map $\mathbb{R}^n\to \mathbb{T}$ is injective on $(K - \mathbf{p}) \cup (K - \mathbf{p}')$, which actually implies that the volume of $(K - \mathbf{p}) \cup (K - \mathbf{p}')$ on $\mathbb{R}^n$ is preserved under the quotient map $\mathbb{R}^n\to \mathbb{T}$. \op{What do we mean by $(K - \mathbf{p}) \cup (K - \mathbf{p}')$ inside $R^n$? $p,p'$ are in torus so different lifts of them to $R^n$ can give different non-equivalent intersections when projected back to the torus (basically $K$ can be very thin and long and its intersection with a fundamental domain can be made of say 100, or even $\omega(n)$, different translates)}
    First, suppose that $(K-\mathbf{p})\cap (K - \mathbf{p}')=\emptyset$ on $\mathbb{T}$, then $(K-\mathbf{p})\cap (K - \mathbf{p}')=\emptyset$ on $\mathbb{R}^n$. Since $K\subseteq K-K$ and $\mathbb{T}$ is a packing torus of $K-K$, the quotient map is injective on $(K - \mathbf{p}) \cup (K - \mathbf{p}')$. The other case is that $K-\mathbf{p}$ and $K - \mathbf{p}'$ intersects on some point $\mathbf{x}\in \mathbb{T}$. Then the conclusion follows from the fact that $(K - \mathbf{p}) \cup (K - \mathbf{p}')\subseteq K-K+\mathbf{x}$ and $\mathbb{T}$ is a packing torus of $K-K$. 
}     
Hence, the number of points of $\mathbf{X}$ in this set has the Poisson distribution with mean 
    \begin{align*}
        \rho \cdot \vol\left( (\mathbf{p} - K) \cup (\mathbf{p}' - K)\right)
        & = \rho \cdot \vol\left( (K - \mathbf{p}) \cup (K - \mathbf{p}')\right) \\
        & = \rho \left(\vol(K - \mathbf{p}) + \vol(K - \mathbf{p}') - \vol\left( (K - \mathbf{p}) \cap (K - \mathbf{p}')\right)\right) \\
        & = 2\rho - \rho \cdot \vol\left(K \cap (K + \mathbf{p} - \mathbf{p}')\right).  
    \end{align*}
    Thus we have that
    \begin{align}\label{equ:exp-Bp-Bp'}
        \mathbb{E}[B_{\mathbf{p}} B_{\mathbf{p}'}]
        & = \mathrm{exp}\left(-\rho \cdot \vol\left( (\mathbf{p} - K) \cup (\mathbf{p}' - K)\right)\right) \notag \\
        & = \mathrm{exp}\left( - 2\rho + \rho \cdot \vol\left(K \cap (K + \mathbf{p} - \mathbf{p}')\right) \right). 
    \end{align}
Observe that there is at most one vector $\mathbf{q}\in\mathbb R^n$ that projects to $\mathbf{p}-\mathbf{p}'$ with $K\cap (K+\mathbf{q})\not=\emptyset$. Indeed, if $\mathbf{q}\not=\mathbf{q}'$ contradict this, satisfying $\mathbf{a}=\mathbf{b}+\mathbf{q}$ and $\mathbf{a}'=\mathbf{b}'+\mathbf{q}'$ for some $\mathbf{a},\mathbf{b},\mathbf{a}',\mathbf{b}'$ in $K\subseteq \mathbb{R}^n$ 
then 
$$
(\mathbf{a}-\mathbf{b})-(\mathbf{a}'-\mathbf{b}')=\mathbf{q}-\mathbf{q}'\in\Lambda\setminus\{\mathbf{0}\},
$$ 
a contradiction to our assumption that $\Lambda$ is a packing lattice for $K-K$. Furthermore, for every $\mathbf{q}\in\mathbb R^n$ that projects to $\mathbf{p}-\mathbf{p}'$, it holds that $\|\mathbf{q}\| 
\ge \Delta \ge 4 L_{K}$. 
Thus the upper bound of Lemma~\ref{LEMMA:small-overlap} applies also inside the torus and gives that
    \begin{align*}
        \vol\left(K \cap (K + \mathbf{p} - \mathbf{p}')\right)
        \le 3^{-\frac{\|\mathbf{p} - \mathbf{p}'\|}{8L_{K}}}
        \le 3^{-\frac{\Delta}{8L_{K}}}
        = 3^{- f_n - \log_{3} \rho}
        = \rho^{-1} 3^{-f_n}. 
    \end{align*}
    Combining it with~\eqref{equ:exp-Bp-Bp'}, we obtain  
    \begin{align}\label{equ:Sigma-2-upper-bound}
        \Sigma_2 
        & \le |P|^{2} \left(\mathrm{exp}\left(-2\rho + 3^{-f_n}\right) - \mathrm{exp}(-2\rho) \right) \notag \\
        & = |P|^{2} \me^{-2\rho} \left(\mathrm{exp}\left(3^{-f_n}\right) - 1\right)
         = \mathbb{E}[B]^{2} \left(\mathrm{exp}\left(3^{-f_n}\right) - 1\right)
        = o\left(\mathbb{E}[B]^{2}\right). 
    \end{align}
    Combining~\eqref{equ:Sigma-1-upper-bound},~\eqref{equ:Sigma-2-upper-bound}, and Lemma~\ref{LEMMA:second-moment}, we obtain 
    \begin{align*}
        \mathbb{P}[\,B = 0\,]
        \le \frac{\mathrm{Var}[B]}{\mathbb{E}[B]^{2}}
        = \frac{1}{\mathbb{E}[B]^{2}} \sum_{\B p, \B p' \in P} \left( \mathbb{E}[B_\B p B_{\B p'}] - \mathbb{E}[B_\B p]\, \mathbb{E}[B_{\B p'}] \right) 
        \le \frac{\Sigma_1 + \Sigma_2}{\mathbb{E}[B]^{2}}
        = o\left(1\right).
    \end{align*}
    This completes the proof of Theorem~\ref{THM:poisson-process-lower-bound-isotropic}. 
\end{proof}

We are now ready to present the proof of Theorem~\ref{THM:poisson-process-lower-bound}. 
We will use the following estimate, which follows from the fact that $\Gamma(x+1) = (1 + o(1)) \sqrt{2\pi x}\left(x/\mathrm{e}\right)^{x}$ as $x\to\infty$ (see e.g.~\cite{Dav59}): 
\begin{align}\label{equ:nu-n-estimate}
    \nu_{n}
    = \frac{\pi^{\frac{n}{2}}}{\Gamma\left(\frac{n}{2} + 1\right)}
    = \frac{\pi^{\frac{n}{2}}}{(1 + o(1)) \sqrt{\pi n}\left(\frac{n}{2\mathrm{e}}\right)^{\frac{n}{2}}}
    = (1 + o(1)) \frac{(2\pi \mathrm{e})^{\frac{n}{2}}}{\sqrt{\pi n}} n^{-\frac{n}{2}}. 
\end{align}

\begin{proof}[Proof of Theorem~\ref{THM:poisson-process-lower-bound}]
    By applying an affine transformation to $K$ (and to the lattice defining the torus $\mathbb{T}$), we can assume that $K \subseteq \mathbb{R}^{n}$ is an isotropic body. In particular, $\vol(K) = 1$ by definition. By monotonicity, it suffices to prove Theorem~\ref{THM:poisson-process-lower-bound} for some $\rho\ge n\ln n/2 - (1+o(1)) n \ln\ln n$. Let $\mathbb{T}$ be a packing torus of $K-K$, and let $P$ be a maximal $B_{1/2}^{n}$-packing of $\mathbb{T}$.
    By~\eqref{equ:nu-n-estimate}, we trivially obtain the upper bound 
    \begin{align}\label{equ:P-packing-upper-bound}
        \ln |P|
        \le \ln\left(\frac{\vol(\mathbb{T})}{\vol(B_{1/2}^{n})}\right)
        & = \ln\left(\frac{\vol(\mathbb{T})}{(1/2)^{n} \nu_{n}}\right) \notag \\
        & = \ln\left( (1+o(1)) \frac{2^{n}\cdot \vol(\mathbb{T}) \sqrt{\pi n}}{(2\pi \me)^{\frac{n}{2}}} n^{\frac{n}{2}} \right) \notag \\
        & \le \frac{n}{2} \ln n +\ln \left(\vol(\mathbb{T})\right)+ n \ln\left(\frac{2}{\sqrt{2\pi \me}}\right) + \ln\left(2\sqrt{\pi n}\right). 
    \end{align}
    On the other hand, it follows from the maximality that $P$ is a unit ball covering of $\mathbb{T}$. 
    Therefore, by~\eqref{equ:nu-n-estimate}, 
    \begin{align}\label{equ:P-packing-lower-bound}
        \ln |P|
        \ge \ln \left( \frac{\vol(\mathbb{T})}{\nu_{n}} \right)
        & = \ln \left( (1+o(1))\frac{\vol(\mathbb{T})\cdot \sqrt{\pi n}}{(2\pi \me)^{\frac{n}{2}}} n^{\frac{n}{2}} \right) \notag \\
        & \ge \frac{n}{2} \ln n + \ln \left(\vol(\mathbb{T})\right)+n \ln\left(\frac{1}{\sqrt{2 \pi\me}}\right) + \ln\left(\frac{\sqrt{\pi n}}{2}\right). 
    \end{align}
    Combining~\eqref{equ:P-packing-upper-bound}, ~\eqref{equ:P-packing-lower-bound} and the fact that $\vol(\mathbb{T})\ge \vol(K-K)\ge 1$, we obtain 
    \begin{align*}
        \ln |P| - n \ln\ln\ln |P|
        \ge \frac{n}{2} \ln n - (1 + o(1)) n \ln\ln n. 
    \end{align*}
    Let $\omega(n)$ be a function tending to infinity sufficiently slowly, with $\omega(n) = o(\ln\ln n)$. Let $\mathbf{X}$ be a random set sampled from the Poisson point process on $\mathbb{T}$ with intensity 
    \begin{align*}
        \rho 
        \coloneqq \ln |P| - n \big(\ln\ln\ln |P| + \omega(n) \big)
        \ge \frac{n}{2}\ln n - (1+o(1)) n \ln\ln n.  
    \end{align*} 
    It follows from Theorem~\ref{THM:poisson-process-lower-bound-isotropic} that 
    \begin{align*}
        \mathbb{P}\left[\,\mathbb{T} \subseteq \mathbf{X} + K\,\right]
        \le \mathbb{P}\left[\,P \subseteq \mathbf{X} + K\,\right]
        = o(1), 
    \end{align*}
    as desired.
    %
\end{proof}

The proof of Theorem~\ref{THM:poisson-process-lower-bound-cube} is similar to that of Theorem~\ref{THM:poisson-process-lower-bound} and will follow from the following counterpart of Theorem~\ref{THM:poisson-process-lower-bound-isotropic}. 
\begin{theorem}\label{THM:hypercube}
    Let $C_{n} = [-1/2,\, 1/2]^{n}$, and let $\mathbb{T}$ be a packing torus of $2 C_n$. 
    Let $P \subseteq \mathbb{T}$ be an $S_n$-packing with $|P|>\vol(\mathbb{T})$, where $S_n$ is the $n$-dimensional cross-polytope
    \begin{align*}
        S_{n}
        \coloneqq \left\{ (x_1, \ldots, x_n) \in \mathbb{R}^{n} \colon |x_1| + \cdots + |x_n| \le 2 \ln n \right\}. 
    \end{align*}
    Suppose that $\mathbf{X} \subseteq \mathbb{T}$ is sampled from the Poisson point process with intensity 
    \begin{align*}
        \rho
        \coloneqq \ln |P| - \ln \left(\vol(\mathbb{T})\right) . 
    \end{align*}
    Then $\mathbb{P}\left[\,P \subseteq C_{n} + \mathbf{X}\,\right]$ tends to $0$ as $n \to \infty$. 
\end{theorem}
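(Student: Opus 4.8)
The plan is to mimic the proof of Theorem~\ref{THM:poisson-process-lower-bound-isotropic}, replacing Lemma~\ref{LEMMA:small-overlap} by the elementary identity $\vol\big(C_n\cap(C_n+\mathbf q)\big)=\prod_{i=1}^{n}(1-|q_i|)\le\me^{-\|\mathbf q\|_1}$ (valid for $\mathbf q\in(-1,1)^n$, with the overlap equal to $0$ otherwise), and letting the $S_n$-packing of $P$ play the role of the $B_{1/2}^n$-packing. Concretely, for $\mathbf p\in P$ let $B_{\mathbf p}$ be the indicator of $\mathbf p\notin\mathbf X+C_n$ and put $B\coloneqq\sum_{\mathbf p\in P}B_{\mathbf p}$. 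Since $\vol(C_n)=1$ and the quotient map $\mathbb R^n\to\mathbb T$ is injective on any translate of $C_n$ (because $\mathbb T$ is a packing torus of $2C_n=C_n-C_n$), the count $|\mathbf X\cap(\mathbf p-C_n)|$ is $\mathrm{Pois}(\rho)$, so $\mathbb E[B_{\mathbf p}]=\me^{-\rho}$ and, by the choice of $\rho$, $\mathbb E[B]=|P|\,\me^{-\rho}=\vol(\mathbb T)\ge\vol(2C_n)=2^n\to\infty$. It therefore suffices to prove $\mathrm{Var}[B]=o(\mathbb E[B]^2)$, since then Lemma~\ref{LEMMA:second-moment} gives $\mathbb P[P\subseteq C_n+\mathbf X]=\mathbb P[B=0]\to 0$.

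For the second moment, exactly as in~\eqref{equ:exp-Bp-Bp'} one gets $\mathbb E[B_{\mathbf p}B_{\mathbf p'}]=\exp\big(-2\rho+\rho\,o_{\mathbf p,\mathbf p'}\big)$, where $o_{\mathbf p,\mathbf p'}\coloneqq\vol\big((\mathbf p-C_n)\cap(\mathbf p'-C_n)\big)$ computed in $\mathbb T$. Repeating the uniqueness observation from the proof of Theorem~\ref{THM:poisson-process-lower-bound-isotropic} (which uses only that $\Lambda$ packs $2C_n$), for $\mathbf p\ne\mathbf p'$ there is at most one vector $\mathbf q\in(-1,1)^n$ projecting to $\mathbf p-\mathbf p'$, and then $o_{\mathbf p,\mathbf p'}=\prod_i(1-|q_i|)\le\me^{-\|\mathbf q\|_1}$, while $o_{\mathbf p,\mathbf p'}=0$ (hence the covariance vanishes) if no such $\mathbf q$ exists. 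I would then extract two facts from $P$ being an $S_n$-packing. First, if such $\mathbf q$ exists then $\mathbf q\notin S_n-S_n=2S_n$ — otherwise writing $\mathbf q=\mathbf s-\mathbf s'$ with $\mathbf s,\mathbf s'\in S_n$ would place the torus point $\mathbf p+\mathbf s'=\mathbf p'+\mathbf s$ in $(\mathbf p+S_n)\cap(\mathbf p'+S_n)$ — so $\|\mathbf q\|_1>4\ln n$ and $o_{\mathbf p,\mathbf p'}\le\me^{-4\ln n}=n^{-4}$. Second, the translates $\mathbf p+S_n$ ($\mathbf p\in P$) are disjoint in $\mathbb T$, whence $|P|\le\vol(\mathbb T)/\vol_{\mathbb T}(S_n)$, and since $\tfrac{4\ln n}{n}C_n\subseteq S_n$ with the quotient map injective on this small cube for $n$ large, $\vol_{\mathbb T}(S_n)\ge\big(\tfrac{4\ln n}{n}\big)^n$; hence $0<\rho\le n\ln\big(\tfrac{n}{4\ln n}\big)\le n\ln n$. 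In particular $\rho\,o_{\mathbf p,\mathbf p'}\le n^{-3}\ln n\le 1$, so for $\mathbf p\ne\mathbf p'$ we get $\mathrm{Cov}(B_{\mathbf p},B_{\mathbf p'})=\me^{-2\rho}\big(\me^{\rho o_{\mathbf p,\mathbf p'}}-1\big)\le 2\rho n^{-4}\,\me^{-2\rho}$.

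Assembling the variance, bounding the diagonal terms by $\mathrm{Var}(B_{\mathbf p})\le\mathbb E[B_{\mathbf p}]$ and the number of ordered off-diagonal pairs trivially by $|P|^2$,
\[
\mathrm{Var}[B]\ \le\ \sum_{\mathbf p\in P}\mathbb E[B_{\mathbf p}]\ +\ 2\rho n^{-4}|P|^2\me^{-2\rho}\ =\ \mathbb E[B]\ +\ 2\rho n^{-4}\,\mathbb E[B]^2 ,
\]
using $|P|^2\me^{-2\rho}=\vol(\mathbb T)^2=\mathbb E[B]^2$. Therefore $\mathrm{Var}[B]/\mathbb E[B]^2\le 1/\mathbb E[B]+2\rho n^{-4}\le 2^{-n}+2n^{-3}\ln n\to 0$, and Lemma~\ref{LEMMA:second-moment} completes the proof.

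The probabilistic content is thus routine; the only delicate part is the geometry on the torus: verifying that a pair contributes to $\mathrm{Var}[B]$ only when its cube-translates meet in $\mathbb T$, that the relevant difference lift $\mathbf q$ is unique with $\|\mathbf q\|_1>4\ln n$, and that $\rho\le n\ln n$. The radius $2\ln n$ of $S_n$ is calibrated precisely so that the pairwise overlap is as small as $n^{-4}$, which dominates the $O(n\ln n)$ size of $\rho$; and the bound $\vol_{\mathbb T}(S_n)\ge(4\ln n/n)^n$ must be established via an explicitly inscribed cube, because the projection of $S_n$ into $\mathbb T$ may wrap around many times, so one cannot simply use $\vol_{\mathbb T}(S_n)=\vol(S_n)$. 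A convenient feature absent from the isotropic case is that here the number of ``neighbours'' of a point $\mathbf p$ can be bounded trivially by $|P|$, which is what makes the off-diagonal estimate so clean.
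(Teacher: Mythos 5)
Your proof is correct and follows essentially the same second-moment argument as the paper: indicators $B_{\mathbf p}$ with $\mathbb E[B]=\vol(\mathbb T)\to\infty$, and a covariance bound driven by the smallness of $\vol\big(C_n\cap(C_n+\mathbf q)\big)$ forced by the $S_n$-packing, combined with $\rho=O(n\ln n)$ and Lemma~\ref{LEMMA:second-moment}. The only (harmless) deviations are that you use $\prod_i(1-|q_i|)\le \me^{-\|\mathbf q\|_1}$ together with $\|\mathbf q\|_1>4\ln n$ (from $\mathbf q\notin 2S_n$) where the paper uses AM--GM with the weaker $\|\mathbf q\|_1\ge 2\ln n$, and that you lower-bound the volume of the projected $S_n$ via an inscribed cube rather than via $\vol(S_n)$ itself --- a point on which your argument is in fact slightly more careful, since the quotient map need not be injective on $S_n$.
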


\begin{proof}[Proof of Theorem~\ref{THM:hypercube}]
    Let $\mathbb{T}$, $P$, and $\mathbf{X}$ be as in the theorem. 
    Similar to the proof of Theorem~\ref{THM:poisson-process-lower-bound-isotropic}, for every point $\mathbf{p} \in P$, let $B_{\mathbf{p}}$ be the characteristic function of the event that $\B p \notin \mathbf{X} + C_{n}$. 
    Since $\vol(C_n) = 1$, the random variable $|\mathbf{X} \cap (\mathbf{p} - C_n)|$ has the Poisson distribution with mean $\rho \cdot \vol(\mathbf{p} - C_n) = \rho$. 
    Therefore, $\mathbb{E}[B_{\mathbf{p}}] = \me^{-\rho}$.
    Since $\vol(\mathbb{T})\ge \vol(2C_n)=2^n$, it follows that the sum $B \coloneqq \sum_{\mathbf{p} \in P}B_{\mathbf{p}}$ satisfies 
    \begin{align}
        \mathbb{E}[B] 
        = |P|\, \me^{-\rho}
        = \vol(\mathbb{T})
        \to \infty
        \quad\text{as}\quad 
        n \to \infty. 
    \end{align}
 
    Let 
    \begin{align*}
        \Sigma_{1}
        & \coloneqq \sum_{\mathbf{p} \in P} \mathbb{E}\left[B_{\mathbf{p}} B_{\mathbf{p}}\right]
        = \sum_{\mathbf{p} \in P} \mathbb{E}\left[B_{\mathbf{p}}\right]
        = \mathbb{E}\left[B\right] \quad\text{and}\quad  \\
        \Sigma_{2}
        & \coloneqq \sum_{\mathbf{p}, \mathbf{p}' \in P,~\mathbf{p} \neq \mathbf{p}'} \left( \mathbb{E}\left[B_{\mathbf{p}} B_{\mathbf{p}'}\right] - \mathbb{E}\left[B_{\mathbf{p}}\right] \mathbb{E}\left[ B_{\mathbf{p}'}\right] \right). 
    \end{align*}
    \begin{claim}\label{CLAIM:Sigma2-cube}
        We have $\Sigma_{2} = o\big( \mathbb{E}\left[B\right]^{2} \big)$. 
    \end{claim}
    \begin{proof}[Proof of Claim~\ref{CLAIM:Sigma2-cube}]
        Fix two distinct points $\mathbf{p}, \mathbf{p}' \in P$. 
        Since $P$ is an $S_{n}$-packing, we have 
        \begin{align*}
            \left(\mathbf{p} + S_{n}\right) \cap \left(\mathbf{p}' + S_{n}\right) 
            = \emptyset,  
        \end{align*}
        which implies that $\mathbf{x} = (x_1, \ldots, x_n) \coloneqq \mathbf{p} - \mathbf{p}'$ does not belong to $ \mathrm{int}(S_{n} - S_{n})$, the interior of $S_{n} - S_{n}$. 
        In particular, $\mathbf{x} \not\in \mathrm{int}(S_n) \subseteq \mathrm{int}(S_n - S_n)$ (since $\mathbf{0} \in S_{n}$), and hence, 
        \begin{align}\label{equ:L1-norm-p-p}
            |x_1| + \cdots + |x_n|
            \ge 2 \ln n.
        \end{align}
        %
Combining it with the AM–GM inequality, we obtain
        \begin{align}\label{equ:cube-intersection-small}
            \vol\left(C_{n} \cap (C_{n} + \mathbf{x})\right)
            = \prod_{i \in [n]} \max\left\{ 1-|x_i|,~0 \right\}
            & \le \max\left\{\left(1 - \frac{|x_1| + \cdots + |x_n|}{n}\right)^{n},0\right\} \notag \\
            & \le  \left(1- \frac{2\ln n}{n}\right)^n 
            \le \frac{1}{n^2}. 
        \end{align}
        On the other hand, since $\vol(S_{n}) = \frac{2^n}{n!} \left(2 \ln n\right)^{n}$, Stirling’s formula gives
    \begin{align}\label{equ:|P|-upper-bound-cube}
        |P| 
        \le \frac{\vol(\mathbb{T})}{\vol(S_{n})}
        \le \frac{\vol(\mathbb{T})}{\frac{2^n}{n!} \left(2 \ln n\right)^{n}}
        = (1+o(1))\sqrt{2\pi n} \left(\frac{n}{4\me \ln n}\right)^n\cdot \vol(\mathbb{T}). 
    \end{align}
    Thus,
        \begin{align}\label{equ:rho-upper-bound-cube}
        \rho=\ln |P| - \ln \left(\vol(\mathbb{T})\right)\le n\ln n-\left(1+o(1)\right)n\ln \ln n =o(n^2) . 
       \end{align}
        Similarly to~\eqref{equ:exp-Bp-Bp'}, it follows from~\eqref{equ:cube-intersection-small} and~\eqref{equ:rho-upper-bound-cube} that 
        \begin{align*}
            \Sigma_{2}
            & \le \sum_{\mathbf{p} \in P} \sum_{\mathbf{p}' \in P \setminus \{\mathbf{p}\}}  \left( \mathbb{E}\left[B_{\mathbf{p}} B_{\mathbf{p}'}\right] - \mathbb{E}\left[B_{\mathbf{p}}\right] \mathbb{E}\left[ B_{\mathbf{p}'}\right] \right) \\
            & \le |P|^2 \left( \mathrm{exp}\left( -2\rho + \frac{\rho}{n^2} \right) - \mathrm{exp}(-2\rho)\right) 
            = \mathbb{E}\left[B\right]^{2} \left(\mathrm{exp}\left(\frac{\rho}{n^2}\right) - 1\right)
            = o\big( \mathbb{E}\left[B\right]^{2} \big),  
        \end{align*}
        which proves Claim~\ref{CLAIM:Sigma2-cube}. 
    \end{proof}

    It follows from Lemma~\ref{LEMMA:second-moment} that 
    \begin{align*}
        \mathbb{P}[\,B = 0\,]
        \le \frac{\mathrm{Var}[B]}{\mathbb{E}[B]^{2}}
        = \frac{1}{\mathbb{E}[B]^{2}} \sum_{\B p, \B p' \in P} \left( \mathbb{E}[B_\B p B_{\B p'}] - \mathbb{E}[B_\B p]\, \mathbb{E}[B_{\B p'}] \right) 
        \le \frac{\Sigma_1 + \Sigma_2}{\mathbb{E}[B]^{2}}
        = o\left(1\right), 
    \end{align*}
    which proves that $\mathbb{P}\left[\,P \subseteq C_{n} + \mathbf{X}\,\right]$ tends to $0$ as $n \to \infty$. 
    \end{proof}
    
    \begin{proof}[Proof of Theorem~\ref{THM:poisson-process-lower-bound-cube}]
   By monotonicity, it suffices to prove Theorem~\ref{THM:poisson-process-lower-bound-cube} for some $\rho\ge n\ln n - (1+o(1)) n \ln\ln n$. Now suppose that $P$ is a maximal $S_{n}$-packing of $\mathbb{T}$. Since $S_{n}$ is centrally symmetric, it follows that $P$ is a $2S_{n}$-covering of $\mathbb{T}$. 
    Consequently, 
    \begin{align*}
        |P|
        \ge \frac{\vol(\mathbb{T})}{\vol(2 S_{n})}
        \ge \frac{\vol(\mathbb{T})}{\vol(2 S_{n})}
        = \frac{\vol(\mathbb{T})}{2^n \frac{2^n}{n!} \left(2 \ln n\right)^{n}}
        = (1+o(1))\sqrt{2\pi n} \left(\frac{n}{8 \me \ln n}\right)^n\cdot \vol(\mathbb{T}). 
    \end{align*}
    It follows that 
    \begin{align*}
        \rho 
        = \ln |P| -\ln \left(\vol(\mathbb{T})\right)
        \ge n \ln n - (1+o(1)) n \ln\ln n. 
    \end{align*}
    %
    By Theorem~\ref{THM:hypercube},
     \begin{align*}
        \mathbb{P}\left[\,\mathbb{T} \subseteq \mathbf{X} + C_n\,\right]
        \le \mathbb{P}\left[\,P \subseteq \mathbf{X} + C_n\,\right]
        = o(1).
    \end{align*}
    This completes the proof of Theorem~\ref{THM:poisson-process-lower-bound-cube}. 
\end{proof}

\bibliographystyle{abbrv}
\bibliography{SphereC}
\end{document}